\newcommand{\GG}{{\cal G}}
\newcommand{\col}{\text{col}}
\newcommand{\vol}{\text{vol}}
\newcommand{\mc}[1]{\mathcal{#1}}
\newcommand{\bb}[1]{\mathbb{#1}}
\newtheorem{theorem}{Theorem}
\newtheorem{corollary}[theorem]{Corollary}
\newtheorem{lemma}[theorem]{Lemma}
\newtheorem{example}[theorem]{Example}
\newtheorem{observation}[theorem]{Observation}
\newtheorem{question}[theorem]{Question}
\title{Sublinear separators in intersection graphs of convex shapes}
\author{Zden\v{e}k Dvo\v{r}\'ak\thanks{Computer Science Institute, Charles University, Prague, Czech Republic. E-mail: {\tt rakdver@iuuk.mff.cuni.cz}.
Supported by project 17-04611S (Ramsey-like aspects of graph coloring) of Czech Science Foundation.}\and
Rose McCarty\thanks{Department of Combinatorics and Optimization, University of Waterloo. Email: {\tt rose.mccarty@uwaterloo.ca}.}
\and Sergey Norin\thanks{Department of Mathematics and Statistics, McGill University. Email: {\tt snorin@math.mcgill.ca}. Supported by an NSERC grant 418520.}}
\date{}
\begin{document}
\maketitle

\begin{abstract}
We give a natural sufficient condition for an intersection graph of compact convex sets in $\mathbb{R}^d$
to have a balanced separator of sublinear size.  This condition generalizes several previous results
on sublinear separators in intersection graphs.  Furthermore, the argument used to prove the existence
of sublinear separators is based on a connection with generalized coloring numbers which has not been
previously explored in geometric settings.
\end{abstract}

\section{Introduction}

For a graph $G$, a set $X\subseteq V(G)$ is a \emph{balanced separator} if every component of $G-X$ has at most $\tfrac{2}{3}|V(G)|$ vertices.  For a function $f:\mathbb{N}\to\mathbb{N}$,
we say that $G$ has \emph{$f$-separators} if every subgraph $H$ of $G$ has a balanced separator of size at most $f(|V(H)|)$.  We say that a class $\GG$ of graphs
has \emph{sublinear separators} if there exists a sublinear function $f$ such that every graph in $\GG$ has $f$-separators.

Famously, Lipton and Tarjan~\cite{lt79} proved that planar graphs have $O(\sqrt{n})$-separators, and demonstrated how the corresponding natural
recursive decomposition on small balanced separators can be used in design of divide-and-conquer style algorithms for various problems~\cite{lt80}.
These results motivated further investigation of graph classes with sublinear separators.  Gilbert et al.~\cite{gilbert} extended the result of Lipton
and Tarjan to graphs drawn in any fixed surface, and Alon et al.~\cite{alon1990separator} proved the substantially more general result that for any graph $H$,
the graphs avoiding $H$ as a minor have $O(\sqrt{n})$-separators.  Even more generally, graphs in which for all $d$, all minors of depth $d$ have density bounded by a polynomial of $d$,
have sublinear separators~\cite{plotkin,dvorak2016strongly,biswal2010eigenvalue}.
Sublinear separators also naturally arise in geometric settings, such as finite element meshes and overlap graphs~\cite{miller1998geometric},
graphs embedded with bounded distortion of distances and their generalizations, and nearest-neighbor graphs~\cite{miller1997separators},
and intersection graphs of connected subsets of a surface with subquadratic number of edges~\cite{lee2016separators}.

Many of these geometric results can be formulated in terms of intersection graphs.
An \emph{intersection representation} of a graph $G$ in $\mathbb{R}^d$ is a function $\varphi$ that to every vertex of $G$ assigns a non-empty set $\varphi(v)\subseteq\mathbb{R}^d$
such that for all distinct $u,v\in V(G)$, the sets $\varphi(u)$ and $\varphi(v)$ intersect if and only if $uv\in E(G)$.
Of course, without further constraints, any graph can be represented in this way, and in particular, the
existence of an intersection representation cannot imply sublinear separators.  For example, every clique
has an intersection representation obtained by choosing the same set to represent each vertex.  To avoid this issue, it is natural to require
that each point is contained only in a bounded number of the sets $\varphi(v)$ for $v\in V(G)$.
For an integer $c$, we say that a representation $\varphi$ of a graph $G$ is \emph{$c$-thin} if for every point $x$ of the space, there exist at most $c$ vertices $v\in V(G)$ such that $x\in\varphi(v)$.

While not all graphs admitting a $c$-thin intersection representation in bounded dimension have sublinear separators, there are many
results giving additional constraints sufficient to enforce this property.  We are particularly interested in constraints which are given only in terms of the ``shapes'' of the sets representing the vertices (their properties invariant on translation), rather than constraints on the positions of the shapes or their intersections; these constraints also directly apply to all induced subgraphs of the represented graph,
which is useful in applications where the graph is recursively decomposed on the small balanced separators.

A set in a Euclidean space is \emph{non-degenerate} if it is not contained in any proper affine subspace (and in particular, it is non-empty).
A set is a \emph{shape} if it is non-degenerate and compact.
Let us remark that the assumption of non-degeneracy is just a technicality to simplify some of our statements and arguments; allowing degenerate shapes would not qualitatively affect our results.
For a set $S$ of shapes, an \emph{$S$-shaped intersection representation} of a graph $G$ is an intersection representation $\varphi$ that to every vertex
of $G$ assigns a translation of one of the shapes in $S$.  Note that the translations of a single shape can be used to represent several different vertices of $G$.

We are interested in describing sets $S$ of shapes in $\bb{R}^d$ such that graphs with a $c$-thin $S$-shaped intersection representation admit sublinear separators. A prime example of such a set is the set $\mc{B}$ of balls in $\mathbb{R}^d$. Any graph with a $c$-thin $\mc{B}$-shaped intersection representation has $O(n^{1-1/d})$-separators.
This result now has a number of distinct (although related) proofs~\cite{miller1997separators,smith1998geometric,har2015approximation}, and is motivated by Koebe's result~\cite{koebe}
implying that planar graphs have a $2$-thin intersection representation by balls in $\mathbb{R}^2$; in particular, this gives an alternative proof that planar graphs have $O(\sqrt{n})$-separators.
The proofs in~\cite{smith1998geometric,har2015approximation} also apply in a more general setting where all shapes in $S$ are connected $k$-fat sets, i.e.,
sets $Q$ such that every ball $B$ with the center in $Q$ satisfies $\vol(B\cap Q)\ge \min(\vol(B)/k,\vol(Q))$; here $\vol$ denotes the volume in $\mathbb{R}^d$.

We are particularly interested in the case where the sets in $S$ are convex.  Indeed, non-convexity
is somewhat troublesome, as arbitrarily large cliques can be $2$-thinly represented by translations of the same non-convex set (e.g., an $L$-shape in $\mathbb{R}^2$, as depicted in Figure~\ref{fig:Ls}).
On the other hand, every compact convex set can be transformed via a bijective affine transformation
to a fat set (as is easily seen using a result of~\cite{chakerian1967some}, stated as Lemma~\ref{lemma-envel} below) and thus the aforementioned results
of~\cite{smith1998geometric,har2015approximation} show that if elements of $S$ are obtained from one convex shape in $\mathbb{R}^d$ by homothety, then any graph with a $c$-thin $S$-shaped representation
has $O(n^{1-1/d})$-separators.  More generally, these results also apply in the case that all shapes in $S$
are convex and have aspect ratio (the ratio of their diameter to their height) bounded by a constant, up to homothety.

\begin{figure}
\centering
\includegraphics[scale=.75]{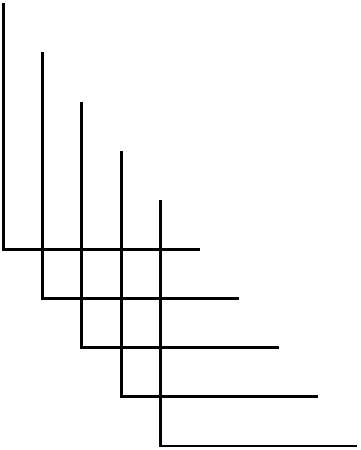}
\caption{An intersection representation of $K_5$ by an $L$-shape.}
\label{fig:Ls}
\end{figure}

However, we clearly cannot allow $S$ to be the set of all convex shapes in $\mathbb{R}^d$.  For example, if $S$ consists of all line segments in $\mathbb{R}^2$ with two different
directions, then we can use their translations to obtain a $2$-thin representation of an arbitrary complete bipartite graph, as in Figure~\ref{fig:narrowR} (strictly speaking, elements of $S$ as given are not
non-degenerate, and thus they are not shapes; however, this issue can be easily overcome by considering narrow rectangles instead of line segments).
This is a special case of a more general issue.  For a set $B\subseteq\mathbb{R}^d$ and a positive real number $k$, let $kB=\{kx:x\in B\}$.
For sets $B_1, B_2\subseteq\mathbb{R}^d$, we write $B_1\le_k B_2$ if a translation of $B_1$ is a subset of $kB_2$.
We say that $B_1$ and $B_2$ are \emph{$\le_k$-comparable} if $B_1\le_k B_2$ or $B_2\le_k B_1$, and \emph{$\le_k$-incomparable} otherwise.
In Lemma~\ref{lemma-incomp}, we will show that if $S$ contains two $\le_k$-incomparable shapes for a large $k$,
then there exists a $2$-thin $S$-shaped intersection representation of a large complete bipartite graph.

\begin{figure}
\centering
\includegraphics[scale=.75]{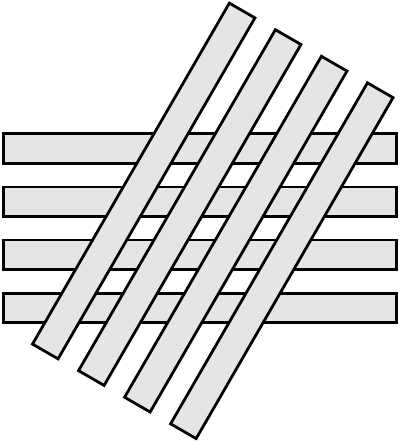}
\caption{An intersection representation of $K_{4,4}$ by narrow rectangles with two different directions.}
\label{fig:narrowR}
\end{figure}

Hence, we need to assume that every two shapes in $S$ are $\le_k$-comparable, for some fixed constant $k$.  This still turns out not to be enough to exclude the existence of
a $2$-thin representation of an arbitrarily large complete bipartite graph. For a depiction of the next example, see Figure~\ref{fig:notSq}.
\begin{example}\label{ex-narrow}
For $i=1,\ldots, m$, let $L_i$ be (a narrow box around) the line segment $\{(i,y,0):1\le y\le m\}$ in $\mathbb{R}^3$.
Let $R_0=L_1$, and for $i=1,\ldots,m$, let $R_i$ be the convex closure of $\{(x,i,0):1\le x\le m\}\cup (R_{i-1} + v_i)$ for a vector $v_i=(0,y_i,1)$ selected by choosing $y_i>0$
so that $R_i$ is disjoint from $R_1\cup\ldots\cup R_{i-1}$; this is possible, since by choosing $y_i$ sufficiently large, we can fit the ``wedge'' $R_i$ into the angle between $R_{i-1}$
and the $xy$-plane.  This gives a $2$-thin representation of $K_{m,m}$ in which any two shapes are $\le_1$-comparable.
\end{example}

\begin{figure}
\centering
\includegraphics[scale=.75]{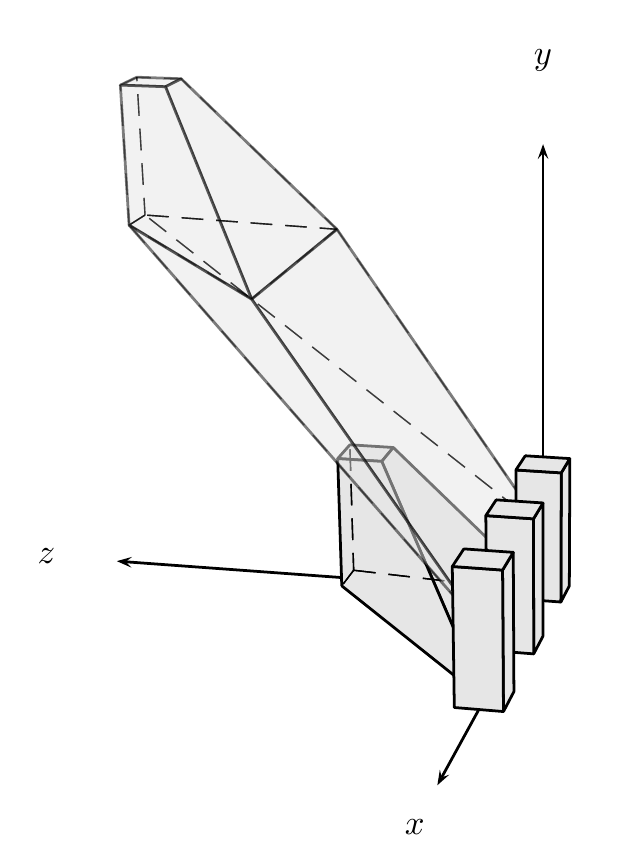}
\caption{An intersection representation of $K_{2,3}$ by $R_1$, $R_2$, $L_1$, $L_2$, and $L_3$.}
\label{fig:notSq}
\end{figure}

The issue is that the copy of $R_{i-1}$ in $R_i$ showing
that $R_{i-1}\le_1R_i$ is too far from the line $\{(x,i,0):1\le x\le m\}$ which actually participates in intersections.
There are two natural ways we could overcome this issue: we could for every point $x\in R_i$ require that either
\begin{itemize}
\item there exists a translation of $R_{i-1}$ contained in (a multiple of) $R_i$ and placed relatively near to $x$, or
\item there exists a translation of $R_{i-1}$ containing $x$ which has a relatively large intersection with $R_i$.
\end{itemize}
This motivates the following definitions.  For sets $B_1, B_2\subseteq\mathbb{R}^d$ and real numbers $k,s\ge 1$, we write
\begin{itemize}
\item $B_1\le_{k,s} B_2$ if for every point $x\in kB_2$, there exists a translation $B'_1$ of $B_1$ and a translation $B''_1$ of
$sB_1$ such that $x\in B''_1$ and $B'_1\subseteq B''_1\cap kB_2$, and
\item $B_1\sqsubseteq_s B_2$ if for every point $x\in B_2$, there exists
a translation $B'_1$ of $B_1$ such that $x\in B'_1$ and $\vol(B'_1\cap B_2)\ge\tfrac{1}{s}\vol(B_1)$.
\end{itemize}
As we will see in Lemmas~\ref{lemma-rel1} and \ref{lemma-rel2}, both of these notions are equivalent up to a transformation of the parameters.

We say that $B_1$ and $B_2$ are \emph{$\sqsubseteq_s$-comparable} if $B_1\sqsubseteq_s B_2$ or $B_2\sqsubseteq_s B_1$. For brevity we combine the conditions we are planning to impose on the representation, and we say that a representation $\varphi$  of a graph $G$ is \emph{$(c,\sqsubseteq_s)$-tame} if $\varphi$ is $c$-thin, $\varphi(v)$ is a convex shape for every $v \in V(G)$, and  $\varphi(u)$ and $\varphi(v)$ are $\sqsubseteq_s$-comparable for all $u,v \in V(G)$. With these conditions, every graph we obtain turns out to have sublinear separators.

\begin{theorem}\label{thm-inters}
For all real numbers $c,s\ge 1$ and positive integers $d$, there exists a real number $\beta$ such that 
every $n$-vertex graph admitting a $(c,\sqsubseteq_s)$-tame representation in $\bb{R}^d$ has a balanced separator of size at most $\beta n^{1-\frac{1}{2d+4}}$.
\end{theorem}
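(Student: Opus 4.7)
My strategy is to bound the weak coloring numbers of $G$ polynomially in $r$ and then apply the implication that polynomial weak coloring numbers yield polynomial expansion and hence strongly sublinear separators (cf.~\cite{dvorak2016strongly}). More precisely, I would aim to establish that $\mathrm{wcol}_r(G) = O(r^{2d+3})$ for every graph $G$ admitting a $(c,\sqsubseteq_s)$-tame representation in $\mathbb{R}^d$; together with the subgraph-monotonicity of $\mathrm{wcol}_r$, this combines with the separator-from-wcol theorem (giving balanced separators of size $O(n^{1-1/(p+1)})$ when $\mathrm{wcol}_r = O(r^p)$) to produce the claimed bound $\beta n^{1-1/(2d+4)}$.

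To construct a suitable ordering, I would list the vertices of $G$ by non-increasing volume of $\varphi(v)$, breaking ties arbitrarily, and denote this ordering by $\sigma$. By $\sqsubseteq_s$-comparability, $\sigma(w)\ge\sigma(u)$ forces $\vol(\varphi(w))\le s\,\vol(\varphi(u))$, and via the equivalence of $\sqsubseteq_s$ with $\le_{k,s}$ (Lemmas~\ref{lemma-rel1},~\ref{lemma-rel2}) also $\mathrm{diam}(\varphi(w))\le k(s,d)\,\mathrm{diam}(\varphi(u))$. Consequently, whenever $u$ is weakly $r$-reachable from $v$ along a path $v=w_0,\dots,w_\ell=u$ with $\ell\le r$, every $\varphi(w_i)$ has diameter $O(\mathrm{diam}(\varphi(u)))$, and since consecutive $\varphi(w_i)$'s intersect, $\varphi(u)$ lies within Euclidean distance $O(r\,\mathrm{diam}(\varphi(u)))$ of $\varphi(v)$.

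Combining this with Chakerian's Lemma~\ref{lemma-envel}, which after a bijective affine transformation makes each shape $O(d)$-fat and hence satisfying $\vol(\varphi(u)) = \Theta(\mathrm{diam}(\varphi(u))^d)$ with constants depending only on $d$, a standard volume-packing argument using $c$-thinness bounds by $O(r^d)$ the number of weakly $r$-reachable shapes with diameter in any fixed dyadic range $[D,2D]$. The principal obstacle is that this range is not a priori bounded above: very large shapes could in principle contribute at infinitely many scales. To close this gap, I would invoke $\sqsubseteq_s$-comparability in the reverse direction: whenever $\vol(\varphi(u))\gg \vol(\varphi(v))$ we have $\varphi(v)\sqsubseteq_s\varphi(u)$, so for every point of $\varphi(v)\cap\varphi(u)$ there is a translate of $\varphi(v)$ lying in a bounded neighborhood of $\varphi(v)$ that meets $\varphi(u)$ in volume at least $\vol(\varphi(v))/s$. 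Applying $c$-thinness to the $\varphi(u)$'s inside this neighborhood caps by $O(cs)$ the number of much-larger shapes that meet $\varphi(v)$ directly, and iterating this charging argument along the at most $r$ steps of a weak-reachability path (each step propagating the ``anchor'' region by a controlled Euclidean amount) bounds the number of effective dyadic scales by a polynomial in $r$. Multiplying by the per-scale bound $O(r^d)$ then delivers $\mathrm{wcol}_r(G) = O(r^{2d+3})$, and inserting this into the separator theorem finishes the proof.
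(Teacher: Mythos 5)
Your high-level strategy is the right one, and it matches the paper's: order the vertices by non-increasing volume of $\varphi(v)$, show that a generalized coloring number is polynomial in $r$ for this ordering, and feed that into a coloring-number-to-separator lemma. You have also correctly identified that $\sqsubseteq_s$-comparability is what controls the shapes larger than $\varphi(v)$. However, the proof as proposed has genuine gaps. First, the bookkeeping does not close. The available tool (Lemma~\ref{lemma-sublin}, via \cite{espsublin,onsubsep,plotkin}) converts a bound $\col_r(G)\le ar^p$ on the \emph{strong} coloring number into separators of size $\gamma n^{1-\frac{1}{2p+4}}$; to reach the exponent $1-\frac{1}{2d+4}$ you therefore need $\col_r(G)=O(r^d)$, which is what Theorem~\ref{thm-color} delivers. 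Your route instead targets $\mathrm{wcol}_r(G)=O(r^{2d+3})$ and invokes a ``separator-from-wcol'' theorem with exponent $1-\frac{1}{p+1}$; no such theorem is cited in or needed by the paper, and if you fall back on Lemma~\ref{lemma-sublin} (using $\col_r\le\mathrm{wcol}_r$) your bound only yields $n^{1-\frac{1}{4d+10}}$. Moreover, weak reachability only constrains the internal vertices of the path to come after $u$, not after $v$, so the intermediate shapes can be as large as $\varphi(u)$; bounding $\mathrm{wcol}_r$ is a strictly harder task than what is required, and your exponent $2d+3$ is never derived (``polynomial in $r$ many scales times $O(r^d)$ per scale'' is not an argument).

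Second, the core counting step is incomplete in two ways. (i) Diameters and Euclidean distances are the wrong invariants here: the shapes may have unbounded aspect ratio, and Lemma~\ref{lemma-envel} only makes a \emph{single} shape fat after an affine map chosen for that shape, so you cannot assume $\vol(\varphi(u))=\Theta(\mathrm{diam}(\varphi(u))^d)$ uniformly. The paper's proof of Lemma~\ref{lemma-color} avoids this by working with the envelope $T(v)$ of $\omega(v)$ and Minkowski sums (Observation~\ref{obs:central}), confining the whole path to $Y_v=p(v)+(3+2(r-1)k)T(v)$. (ii) Your dyadic decomposition leaves the number of relevant scales unbounded, and the ``iterated charging'' meant to fix this is not worked out. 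The paper's argument needs no scales at all: for each reachable $u$, the relation $\varphi(v)\sqsubseteq_{s'}\varphi(u)$ produces a piece $B'_u\subseteq\varphi(u)$ of volume at least $\vol(\varphi(v))/s'$ that lies inside the fixed region $Y_v$ of volume $O((rk)^d\vol(\varphi(v)))$; one application of $c$-thinness to the union of these pieces then gives $|L_{G,\prec,r}(v)|=O(r^d)$ for every scale simultaneously. I would encourage you to replace the dyadic/charging machinery with this single volume comparison, and to switch the target back to the strong coloring number so that Lemma~\ref{lemma-sublin} applies with $p=d$.
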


Let us remark that to our knowledge, Theorem~\ref{thm-inters} does not follow from any of the previous results. In particular, Theorem~\ref{thm-inters} seems to be the first result which allows the shapes to have unbounded aspect ratios (even after a bijective affine transformation). In Section~\ref{sec-AR}, we exhibit a class of graphs with $(4,\sqsubseteq_1)$-tame representations in $\bb{R}^3$ which cannot be represented by convex shapes of bounded aspect ratio in bounded dimension.

Theorem~\ref{thm-inters} also implies all previous results on intersection graphs of convex sets that are stated purely in terms of the shapes.  Note that for every finite collection $S$ of convex shapes there exists $s$ such that shapes in $s$ are pairwise $\sqsubseteq_s$-comparable, and so Theorem~\ref{thm-inters} is applicable to $S$-shaped intersection representations. This is an exception to our claim that non-degeneracy is just a technicality; as we have seen above, arbitrarily large complete bipartite graphs can be represented over just two degenerate shapes (line segments with different directions).

We derive Theorem~\ref{thm-inters} from a stronger statement regarding generalized coloring numbers.
Let $\prec$ be a linear ordering of vertices of a graph $G$.  A path $P$ from a vertex $v$ to a vertex $x$ \emph{respects} the ordering $\prec$
if $v\prec z$ for all $z\in V(P)\setminus\{x,v\}$.
For a vertex $v\in V(G)$ and a non-negative integer $r$,
let $L_{G,\prec,r}(v)$ be the set of vertices $x\in V(G)$ such that $x\preceq v$ and some path from $v$ to $x$ of length at most $r$ respects $\prec$.
Let $\col_{\prec,r}(G)$ be the maximum
of $|L_{G,\prec,r}(v)|$ over $v\in V(G)$, and let the \emph{strong coloring number} $\col_r(G)$ be the minimum of $\col_{\prec,r}(G)$ over all linear orderings $\prec$ of $V(G)$.
A combination of Observation~10 of~\cite{espsublin} and Lemma~2 of~\cite{onsubsep} (based on the result of Plotkin, Rao, and Smith~\cite{plotkin}) gives the following.
\begin{lemma}\label{lemma-sublin}
For all real numbers $a,p>0$, there exists a real number $\gamma$ such that the following claim holds.  If a graph $G$ satisfies $\col_r(G)\le ar^p$ for every positive integer $r$, then
$G$ has a balanced separator of size at most $\gamma n^{1-\frac{1}{2p+4}}$.
\end{lemma}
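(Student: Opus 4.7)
The plan is to prove the lemma in two stages that correspond to the two citations: first translate the hypothesis on $\col_r(G)$ into a polynomial density bound on shallow minors of $G$, and then feed this into the Plotkin--Rao--Smith framework, which produces sublinear balanced separators from such density bounds. This is a natural route because the strong coloring number is designed precisely to encode a ``local density'' that propagates to shallow minors.

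For the first stage, I would fix a linear ordering $\prec$ of $V(G)$ achieving $\col_{\prec, r}(G) \le ar^p$ for each $r$, and show that for any $r$-shallow minor $H$ of $G$, the inequality $|E(H)| = O(r^p)\cdot |V(H)|$ holds. The argument goes as follows: each branch set in the model of $H$ has radius at most $r$ in $G$, so I assign to each branch set its $\prec$-minimum vertex, thereby representing $V(H)$ by a subset $W \subseteq V(G)$. Every edge of $H$ lifts to a path in $G$ of length $O(r)$ joining two representative vertices of $W$; traversing such a path from its higher-ordered endpoint, all interior vertices exceed that endpoint in $\prec$ (by minimality within their branch sets), so the path respects $\prec$. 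Charging the edge to its higher endpoint $v$ and applying $\col_{\prec, cr}(G) \le a(cr)^p$ for an appropriate constant $c$, each $v$ receives $O(r^p)$ charges. This gives the desired polynomial bound $\nabla_r(G) = O(r^p)$; this is essentially what I expect Observation~10 of~\cite{espsublin} to say.

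For the second stage, I would apply the Plotkin--Rao--Smith style separator theorem (Lemma~2 of~\cite{onsubsep}), which states that if all $r$-shallow minors of $G$ have average degree $O(r^p)$, then $G$ admits balanced separators of size $O(n^{1-\frac{1}{2p+4}})$. The exponent arises from optimizing the construction: one builds a separator via BFS layers around well-chosen centers, trading off the cost of the layer-based cut (of order $n/r$ after unfolding) against the forcing of a dense shallow minor at depth $r$, and choosing $r$ as a sublinear function of $n$. Once the density bound from the first stage is plugged in, the optimization yields exactly the claimed exponent $\tfrac{1}{2p+4}$.

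The main obstacle I anticipate is getting the polynomial relationship in the first stage right. A naive reduction through the weak coloring number via $\operatorname{wcol}_r \le (\col_r)^r$ would produce an exponential blow-up that would destroy the polynomial bound needed in stage two. Keeping the analysis inside the strong coloring framework---in particular, exploiting that an order-respecting path can be tracked directly by $\col_{\prec, r}$ without unfolding into the weak version---is essential, and matching the constants to obtain precisely the exponent $\tfrac{1}{2p+4}$ is mostly bookkeeping after that.
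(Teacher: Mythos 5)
The paper gives no proof of this lemma --- it is imported as a combination of two external results (a reduction from strong coloring numbers to polynomial expansion, plus a Plotkin--Rao--Smith-type separator theorem for graphs of polynomial expansion) --- and your two-stage architecture is exactly that intended route; your second stage, invoked as a black box, is consistent with what the paper does. The problem is in your first stage, at the step ``traversing such a path from its higher-ordered endpoint, all interior vertices exceed that endpoint in $\prec$ (by minimality within their branch sets), so the path respects $\prec$.'' Minimality only guarantees that an interior vertex exceeds the minimum of \emph{its own} branch set. If $e=hh'$ is an edge of the shallow minor, $m_h\prec m_{h'}$ are the $\prec$-minima of the two branch sets, and the path is read from $v=m_{h'}$ to $x=m_h$ (as it must be, since the definition of $L_{G,\prec,r}(v)$ requires $x\preceq v$), then the interior vertices lying in $B_h$ are only known to be $\succ m_h$ and may well be $\prec m_{h'}$. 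For example, with $B_h=\{1,2\}$, $B_{h'}=\{3\}$ (labels indicating position in $\prec$) and lifted path $3,2,1$, the interior vertex $2$ violates the condition. So the path need not respect $\prec$, and the charging as described does not go through.

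The repair is a truncation trick rather than a wholesale retreat to weak coloring numbers: walk along the lifted path starting from $m_{h'}$ and let $w_e$ be the \emph{first} vertex encountered with $w_e\prec m_{h'}$. Such a vertex exists (the terminal vertex $m_h$ qualifies), the truncated path respects $\prec$ by the choice of $w_e$, so $w_e\in L_{G,\prec,4r+1}(m_{h'})$; moreover $w_e$ must lie in $B_h$, since every vertex of $B_{h'}\setminus\{m_{h'}\}$ is $\succ m_{h'}$. Distinct edges charged to the same vertex $m_{h'}$ therefore yield distinct witnesses $w_e$ (they lie in disjoint branch sets), giving $|E(H)|\le \col_{4r+1}(G)\cdot|V(H)|$, i.e.\ $\nabla_r(G)=O(r^p)$, after which your second stage applies verbatim. (Two minor points: the lifted path between the two representatives has length up to $4r+1$ rather than $2r+1$, because the $\prec$-minimum of a radius-$r$ branch set can be at distance $2r$ from the attachment vertex; and your counting step --- distinct edges at $m_{h'}$ give distinct reached vertices --- was already correct, so the truncation is the only missing ingredient.)
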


Theorem~\ref{thm-inters} thus is a consequence of the following claim, showing that a graph $G$ which admits a $(c,\sqsubseteq_s)$-tame representation in $\bb{R}^d$ for fixed $c$, $s$ and $d$ has an ordering $\prec$ such that $\col_{\prec,r}(G)$ is polynomial in $r$; note that the same ordering $\prec$ works uniformly for all
values of $r$.
\begin{theorem}\label{thm-color}
For all real numbers $c,s\ge 1$ and positive integers $d$, there exists exists a real number $\delta$ such that the following claim holds. Let $\varphi$ be a $(c,\sqsubseteq_s)$-tame representation of a graph $G$ in $\bb{R}^d$, and
let $\prec$ be a linear ordering of vertices $v\in V(G)$ in a non-increasing order according to the volume of $\varphi(v)$.
Then $\col_{\prec,r}(G)\le \delta r^d$ for every positive integer $r$.
\end{theorem}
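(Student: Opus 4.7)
Fix $v\in V(G)$ and let $B:=\varphi(v)$; the plan is to bound $|L_{G,\prec,r}(v)|$ by $O_{c,s,d}(r^d)$ via a volume-counting argument, carried out in an affine frame adapted to $B$. Using Lemma~\ref{lemma-envel}, apply a bijective affine transformation of $\bb{R}^d$ to the whole representation so that $B$ becomes \emph{fat}: contained in a ball of radius $R$ around a point $o$ and containing a ball of radius $R/\alpha_d$ around $o$, for a constant $\alpha_d=\alpha_d(d)$. Such a transformation preserves convexity, translations, intersections, $c$-thinness, and ratios of volumes, hence preserves $\sqsubseteq_s$-comparability, so the transformed representation remains $(c,\sqsubseteq_s)$-tame; in particular $\vol(B)=\Theta_d(R^d)$ and $\brm{diam}(B)\le 2R$.

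For each $x\in L_{G,\prec,r}(v)\setminus\{v\}$ fix a $\prec$-respecting path $v=u_0,u_1,\ldots,u_\ell=x$ of length $\ell\le r$ and write $B_i:=\varphi(u_i)$. Since $u_i\succ v$ for $0<i<\ell$, we have $\vol(B_i)\le\vol(B)$; combined with $\sqsubseteq_s$-comparability and the fatness of $B$, a short argument based on Lemmas~\ref{lemma-rel1} and~\ref{lemma-rel2} (directly from the $\le_{k,s}$-reformulation when $B_i\sqsubseteq_s B$, and via placing $\brm{diam}(B_i)/O(R)$ disjoint $\sqsubseteq_s$-witnesses inside $B_i$ when only $B\sqsubseteq_s B_i$ holds) yields $\brm{diam}(B_i)\le kR$ for a constant $k=k(s,d)$. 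Since consecutive $B_i$ intersect, induction on $i$ places $B_0\cup\cdots\cup B_{\ell-1}$ inside the ball $D$ of radius $(kr+1)R$ around $o$, and in particular produces a point $p_x\in B_{\ell-1}\cap B_\ell\cap D$.

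We now assign to each such $x$ a charge $C_x\subseteq\varphi(x)$ with $\vol(C_x)\ge\vol(B)/s$, contained in a fixed ball $D^\ast$ of radius $O_{s,d}(rR)$ around $o$. If $B_\ell\sqsubseteq_s B$, the diameter bound of the previous paragraph gives $B_\ell\subseteq D^\ast$, and we set $C_x:=B_\ell$ (of volume at least $\vol(B)\ge\vol(B)/s$). Otherwise $B\sqsubseteq_s B_\ell$: applying the definition at $p_x\in B_\ell$ yields a translation $B'$ of $B$ with $p_x\in B'$ and $\vol(B'\cap B_\ell)\ge\vol(B)/s$; since $\brm{diam}(B')\le 2R$ and $p_x\in D$ we have $B'\subseteq D^\ast$, and we may take $C_x:=B'\cap B_\ell\subseteq\varphi(x)$.

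By $c$-thinness, every point of $\bb{R}^d$ lies in at most $c$ of the shapes $\varphi(y)$, hence in at most $c$ of the charges $C_y$. Integrating indicator functions over $D^\ast$ gives
\[
\bigl(|L_{G,\prec,r}(v)|-1\bigr)\cdot\frac{\vol(B)}{s}\le\sum_{x}\vol(C_x)\le c\cdot\vol(D^\ast)=O_{s,d}\bigl(c\cdot (rR)^d\bigr),
\]
and the fatness estimate $\vol(B)=\Theta_d(R^d)$ delivers $|L_{G,\prec,r}(v)|=O_{c,s,d}(r^d)$, as required. The main obstacle is the last-step shape $B_\ell$, whose volume can be arbitrarily larger than $\vol(B)$; the hypothesis $B\sqsubseteq_s B_\ell$ is used precisely to carve out of $B_\ell$ a chunk of volume $\Theta_d(\vol(B))$ that still lies in the bounded region near $v$ where the counting takes place.
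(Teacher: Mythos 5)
Your proof is correct and follows essentially the same route as the paper's, which factors the argument through the slightly more general Lemma~\ref{lemma-color}: where you normalize $\varphi(v)$ by an affine map and work with balls, the paper works with an envelope parallelepiped of $\varphi(v)$ and Observation~\ref{obs:central}, but the core steps coincide. These are the ordering by volume, converting $\sqsubseteq_s$-comparability plus the volume inequality into diameter/containment control of the intermediate shapes (Lemmas~\ref{lemma-rel2} and~\ref{lemma-cmp}), the chain argument confining the path to a region of volume $O(r^d\vol(\varphi(v)))$, the $\sqsubseteq_s$-witness charge of volume at least $\vol(\varphi(v))/s$ inside the endpoint's shape, and the $c$-thin volume count.
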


We obtained Theorem~\ref{thm-color} on our quest to find a (rough) geometric
characterization of graphs with polynomially bounded strong coloring numbers,
modeled after the following beautiful result of Krauthgamer and
Lee~\cite{KraLee07}. For a graph $G$, a vertex $v\in V(G)$, and a non-negative integer
$r$, let $B_{G,r}(v)$ denote the set of vertices of $G$ reachable from $v$ by a
path of length at most $r$, and let $\Delta_r(G)=\max_{v \in V(G)}|B_r(v)|$. Thus
$\Delta_r(G)$ generalizes the concept of maximum degree of a graph in the same
way that $\col_r(G)$ generalizes the coloring number (a.k.a. ``degeneracy'').
We say that the \emph{growth} of a graph $G$ is bounded by a function $p:\mathbb{N}\to\mathbb{R}$
if $\Delta_r(G)\le p(r)$ holds for all $r\ge 0$.
  
\begin{theorem}[Krauthgamer and Lee~\cite{KraLee07}]\label{thm-KL}
  For every polynomial $p$, there exists an integer $d$ such that the following holds.
  For every graph $G$ whose growth is bounded by $p$, there exists an injection $\varphi: V(G) \to \bb{Z}^d$ such that
  $\| \varphi(u) - \varphi(v)\|_{\infty}=1$ for every edge $uv \in E(G)$.
\end{theorem}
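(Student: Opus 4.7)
The plan is to construct the embedding $\varphi\colon V(G) \to \bb{Z}^d$ as a tuple $(\varphi_1, \dots, \varphi_d)$ of coordinate maps $\varphi_i\colon V(G) \to \bb{Z}$, each $1$-Lipschitz with respect to the graph metric of $G$. The $1$-Lipschitz property of each coordinate immediately gives $\|\varphi(u) - \varphi(v)\|_\infty \le 1$ for every edge $uv$, so the task reduces to arranging that $\varphi$ is injective: then $\varphi(u) \neq \varphi(v)$ combined with the Lipschitz bound forces $\|\varphi(u) - \varphi(v)\|_\infty = 1$ exactly. A counting heuristic pins down the required dimension: since $B_{G,r}(v)$ must inject into the $\ell_\infty$-ball of radius $r$ around $\varphi(v)$ in $\bb{Z}^d$, which contains $(2r+1)^d$ lattice points, we need $p(r) \le (2r+1)^d$ for every $r$, which is achievable for any polynomial $p$ by choosing $d$ large in terms of the degree of $p$.

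First I would extract a multi-scale combinatorial structure from the growth bound. For each integer $\ell \ge 0$ pick a maximal $2^\ell$-separated subset $N_\ell \subseteq V(G)$, arranged to be nested via a standard greedy top-down procedure. A packing argument, combined with the bound $|B_{G, R \cdot 2^\ell}(v)| \le p(R \cdot 2^\ell)$, shows that within graph distance $R \cdot 2^\ell$ of any vertex there are at most $C = C(p, R)$ net points of $N_\ell$. Assigning every vertex to its nearest point in $N_\ell$ (with a consistent tie-breaking rule) then yields nested partitions $\mc{P}_0 \preceq \mc{P}_1 \preceq \dots$ in which each $\mc{P}_\ell$-cluster has radius at most $2^\ell$ and each $\mc{P}_{\ell+1}$-cluster subdivides into at most $C$ children from $\mc{P}_\ell$.

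Next I would encode this hierarchy into integer coordinates. For each scale $\ell$, label the children of every scale-$(\ell+1)$ cluster with distinct elements of $\{0, 1, 2\}^k$ for a fixed $k = O(\log C)$, and combine these labels across scales by summing with carefully chosen scaling factors into $d = O(k)$ global coordinate functions $\varphi_1, \dots, \varphi_d$. Injectivity of the joint map follows because any two distinct vertices $u, v$ lie in different $\mc{P}_\ell$-clusters whenever $2^\ell < d_G(u, v)$, and the corresponding child labels at the smallest such scale differ, distinguishing $\varphi(u)$ from $\varphi(v)$.

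The main obstacle will be ensuring that each $\varphi_i$ is globally $1$-Lipschitz, not just locally. Naively, adjacent vertices $u, v$ sitting in different scale-$\ell$ clusters could be assigned labels differing by more than $1$ in some coordinate, since clusters with consecutive indices need not be ``adjacent'' in label space at every scale. Two ingredients would resolve this: first, a \emph{padded} variant of the hierarchy guaranteeing that any short path (of length at most $2^\ell / 10$, say) remains within a single scale-$\ell$ cluster, which can be constructed deterministically from polynomial growth via a recursive refinement of the nets; and second, ordering the children of each parent cluster along a Hamiltonian path in $\{0, 1, 2\}^k$ under its king-graph adjacency, so that any two children accessible by a short path receive labels at $\ell_\infty$-distance at most $1$. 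Together these two ingredients propagate the local single-scale $1$-Lipschitz condition to a global $1$-Lipschitz property of the full multi-scale embedding.
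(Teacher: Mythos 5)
The paper itself offers no proof of this statement: Theorem~\ref{thm-KL} is imported verbatim from Krauthgamer and Lee~\cite{KraLee07} and is used as a black box, so your sketch can only be judged on its own merits, and as written it has a genuine gap at exactly the point you flag as "the main obstacle." Your first ingredient --- a deterministic ``padded'' hierarchy in which \emph{every} path of length at most $2^\ell/10$ stays inside a single scale-$\ell$ cluster of diameter $O(2^\ell)$ --- does not exist for any connected graph once $2^\ell/10\ge 1$: if every radius-$1$ ball lies inside one cluster, then the two endpoints of every edge share a cluster, and chaining along paths forces the whole connected component into a single cluster, contradicting the diameter bound. Padding is inherently a per-vertex or probabilistic guarantee (some vertices always sit near cluster boundaries), and this is precisely why the actual Krauthgamer--Lee argument is not a deterministic hierarchical labeling: they build the coordinates from randomized scale-by-scale constructions that are $1$-Lipschitz by design and obtain injectivity by controlling, via the Lov\'asz Local Lemma together with the polynomial growth bound, the local events in which two vertices fail to be separated at the scale of their distance.

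A second, independent problem is the step ``combine these labels across scales by summing with carefully chosen scaling factors.'' An edge can cross cluster boundaries at every scale, so the summed coordinate accumulates a contribution at each scale and is only $O(\log\operatorname{diam})$-Lipschitz, not $1$-Lipschitz; and unlike in ordinary metric-embedding arguments you cannot repair this by rescaling at the end, because the target must be $\bb{Z}^d$ with $\ell_\infty$-distance exactly $1$ on edges, and dividing by the Lipschitz constant and rounding destroys either injectivity or the distance-$1$ requirement. The Hamiltonian-path ordering of child labels in the king graph addresses only the single-scale interaction and does nothing about this cross-scale accumulation. The overall architecture (nested nets, $1$-Lipschitz integer coordinates, injectivity from the separating scale, and the $(2r+1)^d\ge p(r)$ dimension count) is the right frame, but the two ingredients you rely on to globalize the Lipschitz property are, respectively, impossible as stated and insufficient, so the proposal does not yet constitute a proof.
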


Note that the graphs admitting an embedding $\varphi$ as in
Theorem~\ref{thm-KL} satisfy $\Delta_r(G) \leq (2r+1)^d$ for every $r$, and
thus Theorem~\ref{thm-KL} characterizes graph classes with polynomial growth
(a class of graphs has \emph{polynomial growth} if there exists a polynomial $p$
such that every graph from the class has growth bounded by $p$).
Also note that representing each vertex $v$ by a ball of radius $1/2$ centered at $\varphi(v)$ gives
a $2$-thin intersection representation by translations of a single convex shape.
Answering the following question would determine whether Theorem~\ref{thm-color} gives a similar characterization. 

\begin{question}\label{que-color}
	Given a polynomial $p$, do there necessarily exist real numbers $c,s\ge 1$ and a positive integer $d$,
	such that the following holds: every graph $G$ satisfying $\col_{r}(G) \le p(r)$ for every non-negative integer $r$ admits
	a $(c,\sqsubseteq_s)$-tame representation in $\bb{R}^d$?
\end{question}

We suspect that the answer to Question~\ref{que-color} is negative based on the following. The map $\varphi$ in Theorem~\ref{thm-KL} can be considered as a product of adjacency preserving maps from $V(G)$ to an infinite path. By analogy one could hope for a representation of graphs with polynomially bounded coloring numbers that has the same product structure, i.e. to have vertices represented by boxes. (A \emph{box} in $\bb{R}^d$ is a product of $d$ closed intervals.)
The following result shows that such a representation does not always exist. 

\begin{theorem}\label{thm-converse}
	For all real  $c,s\ge 1$, every positive integer $d$, and every non-decreasing function $f: \bb{N} \to [3,+\infty)$ such that $\lim_{r\to \infty}f(r) = + \infty$, there exists a graph $G$ such that \begin{description} \item[(i)] there exists a linear order $\prec$ on $V(G)$ such that $\col_{\prec,r}(G)\le f(r)$ for every non-negative integer $r$, and \item [(ii)]
 $G$ does not admit a  $(c,\sqsubseteq_s)$-tame $S$-shaped intersection  representation $\varphi$ in $\bb{R}^d$, where $S$ is the set of boxes. \end{description}
\end{theorem}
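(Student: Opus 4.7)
Fix $c,s\ge 1$, a positive integer $d$, and $f$ as in the theorem.  I plan to construct $G$ as a disjoint union $G=\bigsqcup_{m\ge 1}H_m$ of ``obstruction gadgets''.  Each $H_m$ will be obtained from a base graph $G_m$ (for instance $K_{m,m}$) by subdividing every edge into a path of length $L_m$, with $L_m$ chosen based on $f$ so that the coloring numbers of $H_m$ stay below $f$.  The base graph $G_m$ is chosen so that for $m$ larger than some threshold $m_0=m_0(c,s,d)$, $G_m$ itself cannot be $(c,\sqsubseteq_s)$-tame box-represented in $\bb{R}^d$.  For $G_m=K_{m,m}$ the obstruction arises from combining pairwise $\sqsubseteq_s$-comparability (which constrains the side lengths of comparable boxes) with $c$-thinness (which limits the number of pairwise-disjoint boxes simultaneously incident to a single box) to yield a volumetric contradiction for large $m$.

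To bound coloring numbers, I order the vertices of each $H_m$ by placing the originals of $G_m$ first and then the subdivision vertices sorted by graph-distance to the nearest original.  For a subdivision vertex $v$ at distance $t<L_m/2$ from its nearest original, the only backward neighbour reachable by a $\prec_m$-respecting path is the next subdivision vertex moving toward the original, so $|L_{H_m,\prec_m,r}(v)|$ is uniformly bounded by a small constant.  For an original vertex $v$, a $\prec_m$-respecting path of length at least $L_m$ can reach other originals via the subdivisions, yielding $|L_{H_m,\prec_m,r}(v)|\le |V(G_m)|+1$ for $r\ge L_m$ (and $1$ for $r<L_m$).  Because $f(r)\to+\infty$, $L_m$ can be chosen so that $f(L_m)\ge |V(G_m)|+1$, and hence $\col_{\prec_m,r}(H_m)\le f(r)$ for all $r$.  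Concatenating these orderings into an ordering $\prec$ of $G$ and using that the coloring numbers of a disjoint union equal the maxima over components, (i) follows.

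For (ii), suppose for contradiction that $G$ admits a $(c,\sqsubseteq_s)$-tame box representation in $\bb{R}^d$.  Its restriction to each component $H_m$ is again $(c,\sqsubseteq_s)$-tame, so it suffices to show that for $m$ large, $H_m$ admits no such representation.  The main obstacle lies here: edge-subdivision generally relaxes geometric constraints, because two previously adjacent vertices of $G_m$ can now be represented by boxes far apart in $\bb{R}^d$, linked only by a long chain of subdivision boxes.  The proof must exploit the tameness hypothesis to ``pin down'' these chains and propagate the original volumetric constraints along each subdivided edge: concretely, the $m$ pairwise non-adjacent first-subdivision neighbours of a fixed original $a_i$ realize a configuration inside (or adjacent to) $\varphi(a_i)$ that, continued along the paths to the other originals $b_j$, recovers the obstruction of $G_m$.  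Carrying out this inheritance step rigorously is the technical crux of the proof.
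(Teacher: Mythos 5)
The step you defer as ``the technical crux'' --- showing that a $(c,\sqsubseteq_s)$-tame box representation of the subdivided $K_{m,m}$ would inherit the obstruction of $K_{m,m}$ itself --- is not merely hard; for $d\ge 3$ it is false, so the construction cannot work. Take $H_m$ to be $K_{m,m}$ with every edge subdivided into a path of length $L_m$. Represent each original vertex $a_i$ (resp.\ $b_j$) by a long box $[0,m]\times[0,1]\times[0,1]$ placed so that these $2m$ boxes are pairwise disjoint, and represent each subdivision path by a chain of unit cubes in $\mathbb{R}^3$: attach the $m$ first cubes of the paths leaving $a_i$ at $m$ well-separated positions along the long box of $a_i$, and route the $m^2$ chains through $\mathbb{R}^3$ so that cubes from distinct chains are disjoint (always possible in three dimensions once $L_m$ is large, since there is no obstruction to routing disjoint thick paths between prescribed disjoint terminals). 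A unit cube satisfies (unit cube) $\sqsubseteq_1$ (long box), unit cubes are pairwise $\sqsubseteq_1$-comparable, and the long boxes are translates of one another, so all shapes used are pairwise $\sqsubseteq_1$-comparable boxes; every point lies in at most two of the sets. Hence $H_m$ has a $(2,\sqsubseteq_1)$-tame box representation in $\mathbb{R}^3$ for every $m$, and your graph $G=\bigsqcup_m H_m$ is representable whenever $d\ge 3$. This is consistent with the paper's own remark in Section~\ref{sec-rem} that subdividing a bipartite graph destroys the geometric obstruction, and it is exactly why long subdivisions lower the coloring numbers (your part (i), which is fine and parallels Lemma~\ref{lemma-itercol}) while simultaneously restoring representability: the two goals of your gadget pull against each other.

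The paper avoids this trap with a genuinely different construction and argument. The graph is built in $d+1$ levels by repeatedly attaching $N_i$ hubs to all existing vertices via internally disjoint paths of length $l_i$, with $N_0\ll l_1\ll N_1\ll\cdots\ll N_d$. Non-representability is then derived not by forcing endpoints of a subdivided edge to be close, but by a quantitative version of that idea: ordering vertices by volume and applying Observation~\ref{obs:central} along a path of length $l_j$ shows only that the box of one endpoint meets a dilate $p(v)+lB(v)$ with $l$ \emph{growing linearly in $l_j$}. The contradiction then comes from a pigeonhole/projection lemma for boxes (Lemma~\ref{lemma-boxes}), which produces two large families at different levels with pairwise disjoint projections on a common axis, combined with a one-dimensional interval-counting lemma (Lemma~\ref{lemma-combip}) calibrated against that growth factor $l$. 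If you want to salvage your plan, you would need to replace the subdivided $K_{m,m}$ gadget by something of this iterated, level-dependent type; a single bipartite gadget with long subdivisions cannot suffice in dimension $3$ or higher.
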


The rest of the paper is organized as follows.  In Section~\ref{sec-nocom}, we prove Lemma~\ref{lemma-incomp} showing the necessity of pairwise $\le_k$-comparability.
In Section~\ref{sec-main}, we prove the main result, Theorem~\ref{thm-color}. In Section~\ref{sec-AR} we prove that this theorem allows for classes which cannot be represented by convex shapes of bounded aspect ratio. In Section~\ref{sec-converse}, we  prove Theorem~\ref{thm-converse}. Finally, in Section~\ref{sec-rem} we give further remarks and open problems
regarding the notions we introduced.

\section{Non-comparable shapes}\label{sec-nocom}

We will need the following approximation result for convex shapes.
Let $v_1$, \ldots, $v_d$ be an independent set of vectors in the $d$-dimensional Euclidean space.  A \emph{parallelepiped} with sides $v_1$, \ldots, $v_d$ and center $p$ is
$\{p+\sum_{i=1}^d \alpha_iv_i:-1\le \alpha_i\le 1\}$.  A parallelepiped $T$ is
an \emph{envelope} of a subset $B$ of the Euclidean space if $B\subseteq T$ and $T$ has the smallest volume among
parallelepipeds with this property.  Observe that if $B$ is compact and non-degenerate, then it has a (not necessarily unique) envelope.
The following claim follows by an inspection of the proof of Lemma~1 in~\cite{chakerian1967some}.

\begin{lemma}\label{lemma-envel}
Let $B$ be a convex shape in $\mathbb{R}^d$.  If $T$ is an envelope of $B$, then $T\le_d B$.
\end{lemma}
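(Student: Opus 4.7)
The plan is to proceed in three steps: affine normalization, extraction of extremal points using the minimality of $T$, and a convex-combination argument yielding an inner homothetic copy of $T$.

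First, by an invertible affine transformation of $\mathbb{R}^d$, I would reduce to the case where $T$ is the standard cube $T_0 := [-1,1]^d$. Both the envelope property and the relation $\le_d$ are affinely invariant: invertible affine maps send parallelepipeds to parallelepipeds and scale all volumes uniformly (so minimality is preserved), and they commute appropriately with the dilations and translations appearing in the definition of $\le_d$. Next, I would use the minimality of $T_0$ to extract, for each coordinate $i\in\{1,\ldots,d\}$ and each sign $\epsilon\in\{+,-\}$, a point $p_i^\epsilon\in B$ with $(p_i^\epsilon)_i=\epsilon$. Otherwise, if $B$ missed the face $\{x_i=1\}$ by some $\delta>0$, the axis-aligned box obtained by sliding that face inward by $\delta$ would still contain $B$ while having strictly smaller volume, contradicting the minimality of $T_0$ among all parallelepipeds.

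The crux is then to show that the convex hull of the $2d$ extremal points $\{p_i^\epsilon\}$ contains a translate of $\tfrac{1}{d}T_0=[-1/d,1/d]^d$. Granted this, choosing $c$ with $c+\tfrac{1}{d}T_0\subseteq\text{conv}\{p_i^\epsilon\}\subseteq B$ gives $T_0+dc\subseteq dB$, which is exactly $T_0\le_d B$. A natural candidate for $c$ is the centroid $\tfrac{1}{2d}\sum_{i,\epsilon}p_i^\epsilon\in B$; writing $\lambda_i^\epsilon=\tfrac{1}{2d}+\mu_i^\epsilon$, the task becomes, for each $y\in[-1/d,1/d]^d$, to find $\mu_i^\epsilon$ with $\sum\mu_i^\epsilon=0$, $\sum\mu_i^\epsilon p_i^\epsilon=y$, and $\mu_i^\epsilon\ge -\tfrac{1}{2d}$.

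The main obstacle is this final convex-combination task. The natural antisymmetric ansatz $\mu_i^\pm=\pm\beta_i/2$ automatically satisfies $\sum\mu_i^\epsilon=0$, but it requires the parallelepiped with sides $(p_i^+-p_i^-)/2$ to contain $T_0$, which can fail whenever those difference vectors are nearly linearly dependent. To overcome this one must also allow nonzero $\alpha_i := \mu_i^++\mu_i^-$, contributing corrections along the midpoint directions $m_i=(p_i^++p_i^-)/2$; by coupling such ``diagonal'' corrections across different coordinate pairs while respecting $\sum\alpha_i=0$, one deploys the minimality of $T_0$ beyond its use in the extraction step (in the spirit of Chakerian's proof~\cite{chakerian1967some}) to obtain the desired convex combination.
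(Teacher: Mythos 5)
Your normalization to $T_0=[-1,1]^d$ and the extraction of a contact point of $B$ on each facet are correct (note that the paper itself gives no proof here, deferring to an inspection of Chakerian's argument), and the final reduction --- if some translate of $[-1/d,1/d]^d$ lies in $B$, then $T_0\le_d B$ --- is also fine. But the proof is not complete: the entire content of the lemma sits in the step you yourself label ``the crux'', and you end by asserting that one ``deploys the minimality of $T_0$ \ldots to obtain the desired convex combination'' without actually doing so. As your own discussion shows, facet contact alone is not enough (take all $p_i^{\pm}=\pm(1,\dots,1)$: the hull is a segment), so a second, quantitative use of minimality is mandatory, and you never specify it. This is a genuine gap, not a routine omission.

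The missing idea is that minimality lets you choose the contact points so that $p_i^+-p_i^-=2e_i$, after which the ``antisymmetric ansatz'' you dismiss works verbatim. Indeed, for any $u$ with $u_i=0$, replacing the slab $\{|x_i|\le 1\}$ by the slab $\{|\langle e_i+u,x\rangle-b|\le \tfrac12 w_B(e_i+u)\}$ (where $w_B=h_{B-B}$ is the width function) yields a parallelepiped containing $B$ of volume $2^{d-1}w_B(e_i+u)$; minimality forces $w_B(e_i+u)\ge 2=w_B(e_i)$, so $e_i$ minimizes $h_{B-B}$ on the hyperplane $\{y_i=1\}$. If $2e_i\notin B-B$, one could separate $2e_i$ from the exposed face $\{z\in B-B:z_i=2\}$ by some $u$ with $u_i=0$ and get a negative directional derivative of $h_{B-B}$ at $e_i$ along $u$, a contradiction; hence $2e_i\in B-B$ and one may take $p_i^{\pm}=m_i\pm e_i$ with $(m_i)_i=0$. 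Then, setting $c=\tfrac1d\sum_i m_i$, every $y\in[-1/d,1/d]^d$ satisfies
$$ c+y=\sum_{i=1}^d\left(\tfrac{1+dy_i}{2d}\,p_i^+ + \tfrac{1-dy_i}{2d}\,p_i^-\right), $$
a convex combination, so $c+\tfrac1d T_0\subseteq\operatorname{conv}\{p_i^{\pm}\}\subseteq B$. Without this (or an equivalent) second use of minimality, your argument does not close.
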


The \emph{height} of a compact set $B\subseteq \mathbb{R}^d$ is the minimum $h$ such that $B$ is contained between two parallel hyperplanes
at distance $h$.  In particular,
the height of a parallelepiped with sides $Z$ is the minimum over $z\in Z$ of the distance from $z$ to the hyperplane spanned by $Z\setminus\{z\}$.
Note that a parallelepiped of height $h$ and center $v$ contains a ball of diameter $h$, centered at $v$.
This has the following approximate generalization.

\begin{lemma}\label{lemma-height}
Every convex shape $B$ in $\mathbb{R}^d$ of height $h$ contains a ball of diameter $h/d$.
\end{lemma}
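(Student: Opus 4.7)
The plan is to derive Lemma~\ref{lemma-height} by chaining Lemma~\ref{lemma-envel} with the already-noted fact that every parallelepiped of height $h'$ and center $v$ contains a ball of diameter $h'$ centered at $v$. The intuition is that an envelope $T$ of $B$ is ``close to'' $B$ up to a factor of $d$ by Lemma~\ref{lemma-envel}, and a parallelepiped's inscribed ball is controlled directly by its height.

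First I would let $T$ be an envelope of $B$; this exists because $B$ is a compact non-degenerate convex set. Lemma~\ref{lemma-envel} yields $T\le_d B$, i.e.\ there is a translate $T'$ of $T$ with $T'\subseteq dB$. Rescaling by $1/d$, this is equivalent to saying that $B$ contains a translate $P$ of the scaled parallelepiped $\tfrac{1}{d}T$. Note $P$ is itself a parallelepiped (its sides are $1/d$ times the sides of $T$), so its height is exactly $1/d$ times the height of $T$.

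Second I would bound the height of $T$ below by the height $h$ of $B$. This is the one spot that needs a short argument: since $B\subseteq T$, any two parallel hyperplanes containing $T$ also contain $B$, so the minimum distance between such a pair of hyperplanes for $T$ is at least the corresponding minimum for $B$. That is, the height of $T$ is at least $h$, and consequently the parallelepiped $P\subseteq B$ has height at least $h/d$.

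Finally I would apply the parallelepiped observation to $P$: being a parallelepiped of height at least $h/d$, it contains a ball of diameter at least $h/d$ centered at its center. Since $P\subseteq B$, this ball lies in $B$, which is what we wanted.

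The only mildly subtle point—the main ``obstacle,'' such as it is—is the monotonicity comparison between the heights of $B$ and of its envelope $T$, which must be phrased carefully because Lemma~\ref{lemma-envel} points in the opposite direction to set-containment: it controls $B$ by a \emph{smaller} copy of $T$, while we need to control the height of the larger envelope by that of $B$. Once this is in place, the three ingredients compose directly.
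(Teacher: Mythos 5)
Your proof is correct and follows exactly the paper's argument: take an envelope $T$ of $B$, note its height is at least $h$, use Lemma~\ref{lemma-envel} to place a translate of $\tfrac{1}{d}T$ inside $B$, and inscribe a ball in that parallelepiped. The extra care you take with the direction of the height comparison and with unwinding $T\le_d B$ into the containment of $\tfrac{1}{d}T$ in $B$ is exactly the detail the paper leaves implicit.
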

\begin{proof}
Let $T$ be an envelope of $B$.  The height of $T$ is at least $h$.  By Lemma~\ref{lemma-envel},
a translation of $\tfrac{1}{d}T$, whose height is at least $h/d$, is contained in $B$.
Consequently, $B$ contains a ball of diameter $h/d$.
\end{proof}

We are now ready to show the necessity of pairwise $\le_k$-comparability.

\begin{lemma}\label{lemma-incomp}
Let $S$ be a set of convex shapes in $\mathbb{R}^d$, let $m$ be a positive integer, and let $k=2d^{5/2}m$. 
If there exist $\le_k$-incomparable shapes $B_1,B_2\in S$, then $K_{m,m}$ has an $S$-shaped $2$-thin intersection representation.
\end{lemma}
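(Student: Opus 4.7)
The plan is to exploit $\le_k$-incomparability to force $B_2$ to be much longer than $B_1$ in one direction and much thinner in an orthogonal direction, then to build $K_{m,m}$ as a grid: $m$ translates of $B_1$ spaced along the long direction of $B_2$ and $m$ translates of $B_2$ spaced along its short direction.

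Since $\le_k$-comparability is preserved when the same bijective affine transformation is applied to both shapes, I would first normalize $B_1$ using John's theorem, so that the closed unit ball $U$ in $\mathbb{R}^d$ satisfies $U\subseteq B_1 \subseteq dU$. Under this normalization, $B_2\not\le_k B_1$ implies that no translate of $B_2$ fits into $kB_1 \supseteq kU$, so the circumradius of $B_2$ exceeds $k$; dually, $B_1\not\le_k B_2$ combined with $B_1\subseteq dU$ implies that no translate of $dU$ fits into $kB_2$, so the inradius of $B_2$ is less than $d/k$. Applying John's theorem to $B_2$ then yields an ellipsoid $E_2$ with $E_2\subseteq B_2\subseteq dE_2$, and rotating coordinates aligns its semi-axes $a_1\ge\cdots\ge a_d$ with $e_1,\ldots,e_d$ while preserving the previous normalization of $B_1$. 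The circumradius and inradius bounds then give $a_1>k/d$ and $a_d<d/k$, so $B_2$ is long in the $e_1$-direction and thin in the $e_d$-direction.

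For the construction I would place the $i$-th translate of $B_1$ at center $q_i := i\gamma e_1$ and the $j$-th translate of $B_2$ at center $p_j := j\delta e_d$, for $i,j\in\{1,\ldots,m\}$, where $\gamma$ is chosen slightly larger than $2d$ (an upper bound on the $e_1$-width of $B_1$, since $B_1 \subseteq dU$) and $\delta$ is chosen slightly larger than $2da_d$ (an upper bound on the $e_d$-width of $B_2$, since $B_2 \subseteq dE_2$). The spacings make both families internally disjoint, so each point lies in at most one translate from each family, giving a $2$-thin representation. For every pair $(i,j)$ the point $x := q_i + j\delta e_d$ belongs to $B_1+q_i$ provided $j\delta e_d\in U\subseteq B_1$, which reduces to $m\delta\le 1$; and it belongs to $B_2+p_j$ provided $i\gamma e_1\in E_2\subseteq B_2$, which reduces to $m\gamma\le a_1$. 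Substituting $a_d<d/k$ and $a_1>k/d$ shows that both conditions amount to $k$ being of order $d^2 m$, so the given bound $k=2d^{5/2}m$ provides a factor of $\sqrt{d}$ of slack, more than enough to absorb the small margins in the choices of $\gamma$ and $\delta$. The main technical points will be verifying that $\le_k$-comparability is indeed preserved when the same affine transformation is applied to both shapes (since affine maps do not commute with dilations, this needs a small check) and carefully tracking the constants through the two successive applications of John's theorem so that the final inequality aligns with $k = 2d^{5/2}m$.
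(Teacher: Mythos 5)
Your proposal is correct and follows essentially the same route as the paper's proof: both arguments extract from the two incomparabilities that $B_2$ is long in one direction and thin in an orthogonal one (giving bounds of order $k/d$ and $d/k$ respectively), and then realize $K_{m,m}$ by spacing $m$ small translates of $B_1$ along the long direction while stacking $m$ translates of $B_2$ in the thin direction. The only cosmetic difference is that you normalize via John ellipsoids where the paper uses parallelepiped envelopes (Lemma~\ref{lemma-envel}) and the longest line segment of $B_2$ in place of the longest semi-axis; the affine-invariance of $\le_k$ that you flag does hold and is used implicitly in the paper as well.
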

\begin{proof}
By applying a bijective affine transformation to the space, we can assume that the unit cube $T_1=[ -1/2,1/2]^d$ is an envelope of $B_1$.
Let $U$ be the ball of diameter $\sqrt{d}$ centered at the origin, so that $B_1\subseteq T_1\subseteq U$
and $\tfrac{1}{\sqrt{d}}U\subseteq T_1$.  Let $T_2$ be an envelope of $B_2$.

Since $B_1\not\le_k B_2$, we have $U\not\le_k B_2$, and thus $B_2$ does not contain a ball of diameter $\sqrt{d}/k$.
By Lemma~\ref{lemma-height}, $B_2$ has height less than $d^{3/2}/k$.
Hence, there exists a vector $v$ of length $d^{3/2}/k$ and a hyperplane $H$ perpendicular to $v$ such that $B_2$ is contained strictly between $H$ and $H+v$.

By Lemma~\ref{lemma-envel}, there exists $p$ such that $p+\tfrac{1}{d}T_1\subseteq B_1$, and thus the ball $U'=p+d^{-3/2}U$ of diameter $1/d$ is a subset of $B_1$.
Let $s$ be the longest line segment contained in $B_2$ and let $r$ be the length of $s$.
Since $B_2\not\le_k B_1$, we have $B_2\not\le_k U'$.
Note that $B_2$ is contained within a ball of radius $r$ centered at an end of $s$, and thus
$r>\tfrac{k}{2d}$.  Consequently, $s$ contains $m$ points $z_1$, \ldots, $z_m$ such that the distances between the points are greater than $\sqrt{d}$.

For $i=1,\ldots, m$, let $L_i=B_1-p+z_i$ and $R_i=B_2+(i-1)v$.  Note that the shapes $R_1$, \ldots, $R_m$ are disjoint by the choice of $v$,
and that the shapes $L_1$, \ldots, $L_m$ are contained in balls $U-p+z_1$, \ldots, $U-p+z_m$ which are disjoint by the choice of $z_1$, \ldots, $z_m$.
Furthermore, for any $i,j\in\{1,\ldots,m\}$, the point $z_i+(j-1)v$ is contained in both $L_i$ (since $L_i$ contains the ball $z_i+U'-p=z_i+d^{-3/2}U$ of radius
$\tfrac{1}{2d}$ centered at $z_i$ and the distance between $z_i$ and $z_i+(j-1)v$ is less than $\tfrac{md^{3/2}}{k}=\tfrac{1}{2d}$) and $R_j$ (since $z_i\in B_2$).
Hence, the shapes $L_1$, \ldots, $L_m$, $R_1$, \ldots, $R_m$ give a $2$-thin intersection representation of $K_{m,m}$.
\end{proof}

\section{Coloring numbers}\label{sec-main}

For the purpose of this paper we say that  $B \subseteq \bb{R}^d$ is \emph{centrally symmetric} if $B=-B$, i.e. $B$ is invariant under reflection across the origin. We start with a trivial observation.

\begin{observation}\label{obs:central}
	Let $A,B \subseteq \bb{R}^d$ be such that $B$ is centrally symmetric and convex,  and let $C$ be the union of all translates $B'$ of $B$ such that $B' \cap A\neq \emptyset$. Then $C \subseteq A+2B$.
\end{observation}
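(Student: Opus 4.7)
The plan is a direct unpacking of the definitions, using the two hypotheses on $B$ exactly once each. Let $c \in C$ be arbitrary; by definition there is a translate $B' = B + t$ with $c \in B'$ and some point $a \in A \cap B'$. The goal is to show $c \in A + 2B$, which amounts to showing $c - a \in 2B$.

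First I would rewrite the two membership facts as statements about $B$: from $c \in B+t$ we get $c - t \in B$, and from $a \in B+t$ we get $a - t \in B$. Central symmetry of $B$ then converts the latter into $t - a \in B$. Adding the two vectors $c - t$ and $t - a$ gives $c - a \in B + B$. The only remaining step is to invoke convexity of $B$ to conclude that $B + B = 2B$, which follows from the identity $\tfrac12(b_1 + b_2) \in B$ for $b_1, b_2 \in B$ (and the trivial reverse inclusion $2B \subseteq B + B$). Hence $c - a \in 2B$, so $c \in a + 2B \subseteq A + 2B$, as required.

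There is no real obstacle here; the point of the observation is just to record, in a convenient form, that growing the set $A$ by $2B$ swallows every translate of $B$ that meets $A$. The only thing worth being careful about is that both hypotheses (central symmetry and convexity) are genuinely needed: central symmetry lets us pass from $a - t \in B$ to $t - a \in B$, and convexity lets us collapse $B + B$ down to $2B$ rather than leaving the bound as the Minkowski sum.
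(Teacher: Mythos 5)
Your proof is correct and is essentially the paper's argument, just written pointwise instead of via Minkowski-sum identities: the paper phrases the same two steps as "$A \cap (u+B)\neq\emptyset$ iff $u \in A+(-B) = A+B$" (central symmetry) and then "$u+B \subseteq A+2B$" (convexity, i.e.\ $B+B=2B$). No issues.
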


\begin{proof}
	We need to show that for every $u \in \bb{R}^d$ if $A \cap (u+B) \neq \emptyset$ then $u+B \subseteq  A+2B$. The condition $A \cap (u+B) \neq \emptyset$ is equivalent to $u \in A + (-B)$. As $A+(-B) = A+B$ and $B$ is convex, this in turn implies $u+B \subseteq  A+2B$.
\end{proof}

We can now prove the following lemma, which as we will show later implies Theorem~\ref{thm-color}.
\begin{lemma}\label{lemma-color}
For all real numbers $c,k,s\ge 1$ and positive integers $d$, there exists a real number $\delta$ such that the following claim holds.
Let $G$ be a graph, let $\prec$ be a linear ordering of vertices of $G$, and let $\iota$ and $\omega$ be functions assigning shapes in $\mathbb{R}^d$ to vertices of $G$, satisfying
the following conditions:
\begin{itemize}
\item[(a)] for every $v\in V(G)$, $\omega(v)$ is convex and $\iota(v)\subseteq \omega(v)$,
\item[(b)] for every $x\in\mathbb{R}^d$, there exist at most $c$ vertices $v\in V(G)$ such that $x\in \iota(v)$,
\item[(c)] for every $u,v\in V(G)$ such that $u\prec v$, we have $\omega(v)\le_k \omega(u)$ and $\omega(v)\sqsubseteq_s\iota(u)$, and
\item[(d)] for every $uv\in E(G)$, if $u\prec v$, then $\omega(v)\cap \iota(u)\neq\emptyset$.
\end{itemize}
Then $\col_{\prec,r}(G)\le \delta r^d$ for every positive integer $r$.
\end{lemma}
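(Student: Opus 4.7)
I will fix $v \in V(G)$ and bound $|L_{G,\prec,r}(v)|$. The main tool is a ``local norm'' near $v$: the centrally symmetric convex body $B := k(\omega(v)-\omega(v))$. By condition~(c), every $w$ with $w \succeq v$ satisfies $\omega(w) \le_k \omega(v)$, so a translate of $\omega(w)$ is contained in $k\omega(v)$, which gives $\omega(w)-\omega(w) \subseteq B$. Thus each such $\omega(w)$ has ``$B$-diameter'' at most $1$, while $\omega(v)$ itself is a translate of a subset of $\tfrac{1}{k}B$.

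The first key step is a propagation argument. Given $u \in L_{G,\prec,r}(v) \setminus \{v\}$ and a $\prec$-respecting path $v=w_0, w_1, \ldots, w_\ell = u$ with $\ell \le r$, all internal $w_i$ satisfy $w_i \succ v$ by the definition of $L_{G,\prec,r}$. Using condition~(d) on each edge together with the inclusion $\iota \subseteq \omega$, I pick intersection points $q_i \in \omega(w_{i-1})\cap\omega(w_i)$ for $i \le \ell-1$ and a terminal point $q_\ell \in \omega(w_{\ell-1})\cap\iota(u)$. Since consecutive $q_i,q_{i+1}$ lie in a common $\omega(w_i)$ of $B$-diameter at most $1$, a telescoping sum yields $q_\ell - q_1 \in (\ell-1)B \subseteq rB$. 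With $q_1 \in \omega(v)$ this gives the conclusion
\[
\iota(u) \cap (\omega(v)+rB) \neq \emptyset.
\]

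The second step is a volume estimate. Since $u \prec v$, condition~(c) provides $\omega(v) \sqsubseteq_s \iota(u)$, which I apply at the point $q_\ell \in \iota(u)$ to obtain a translate $\omega'$ of $\omega(v)$ with $q_\ell \in \omega'$ and $\vol(\omega'\cap\iota(u)) \ge \vol(\omega(v))/s$. Since $\omega' \subseteq q_\ell + (\omega(v)-\omega(v)) = q_\ell + \tfrac{1}{k}B$ and $q_\ell \in \omega(v)+rB$, I conclude $\omega' \subseteq T := \omega(v) + (r+1)B$. Summing the lower bound over all $u \in L_{G,\prec,r}(v)\setminus\{v\}$ and applying $c$-thinness of $\iota$ gives
\[
(|L_{G,\prec,r}(v)|-1)\cdot\frac{\vol(\omega(v))}{s} \le \sum_{u}\vol(\iota(u)\cap T) \le c\,\vol(T).
\]
I estimate $\vol(T) \le (r+2)^d\vol(B)$ using that $\omega(v)$ itself fits in a translate of $\tfrac{1}{k}B$, and I bound $\vol(B)$ by $\vol(\omega(v))$ through Lemma~\ref{lemma-envel} applied to an envelope $T_v$ of $\omega(v)$: this gives $\vol(T_v)\le d^d\vol(\omega(v))$, and since $\omega(v)-\omega(v)\subseteq T_v-T_v$ which is a centrally symmetric parallelepiped of volume $2^d\vol(T_v)$, I obtain $\vol(B)\le (2kd)^d\vol(\omega(v))$. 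Combining the inequalities yields $\col_{\prec,r}(G) \le \delta r^d$ for some $\delta = \delta(c,k,s,d)$.

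The main obstacle I anticipate is that at the endpoint $u$ of the path, $\omega(u)$ may be much larger than $\omega(v)$, so any attempt to follow the path purely through the $\omega$-shapes loses control at the last step. The resolution is to never use $\omega(u)$: I track the path only up to a point $q_\ell \in \iota(u)$ and then invoke the $\sqsubseteq_s$ hypothesis to guarantee that $\iota(u)$ packs at least $\vol(\omega(v))/s$ of volume into the bounded region $T$ whenever it enters the neighborhood of $\omega(v)$ at all. This converts a potentially unwieldy global geometric configuration into a clean local volume-packing count against $T$.
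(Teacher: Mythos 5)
Your proof is correct and follows essentially the same strategy as the paper: chain along a $\prec$-respecting path to show that $\iota(u)$ must meet an $O(r)$-fold blow-up of $\omega(v)$, then use $\omega(v)\sqsubseteq_s\iota(u)$ to pack a volume of at least $\vol(\omega(v))/s$ from each such $u$ into that bounded region, and finish with $c$-thinness plus the envelope bound $\vol(T_v)\le d^d\vol(\omega(v))$ from Lemma~\ref{lemma-envel}. The only (cosmetic) difference is that you perform the chaining with the difference body $k(\omega(v)-\omega(v))$ and a telescoping sum of intersection points, whereas the paper uses translates of the envelope parallelepiped $kT(v)$ together with Observation~\ref{obs:central}; both yield the same linear-in-$r$ containment and the same final count.
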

\begin{proof}
Let $\delta=2cs(2k+1)^dd^d$.
Consider a positive integer $r$ and a vertex $v\in V(G)$; we need to argue that $|L_{G,\prec,r}(v)|\le \delta r^d$.
We can assume $|L_{G,\prec,r}(v)|\ge 1$, as otherwise the claim is trivial.
Let the parallelipiped $p(v)+T(v)$ with center $p(v)$ be an envelope of $\omega(v)$,
and let $Y_v=p(v)+(3+2(r-1)k)T(v)$. Note that $T(v)$ is centrally symmetric.

Suppose that a vertex $u\neq v$ belongs to $L_{G,\prec,r}(v)$; hence, there exists a path $x_0x_1 \ldots x_m$
in $G$ for some $m\le r$ such that $x_0=v$, $x_m=u$, and $v\prec x_1, \ldots, x_{m-1}$.
By (c), for $i=1,\ldots,m-1$, and the definition of an envelope, there exists a translation $T_i$ of $kT(v)$ containing $\omega(x_i)$;
let $T_0=p(v)+T(v)$.  By (a) and (d), $T_i$ intersects $T_{i-1}$ for $1\le i\le m-1$ and $T_{m-1}$ intersects $\iota(u)$
in some point $y_u$. Since $\omega(v)\sqsubseteq_s\iota(u)$ by (c), there exists a translation $B_u$ of $\omega(v)$
such that $y_u\in B_u$ and $\vol(B_u\cap \iota(u)) \ge \tfrac{1}{s}\vol(\omega(v))$.
Let $B'_u = B_u\cap \iota(u)$. Repeated application of Observation~\ref{obs:central} implies $B'_u \subseteq Y_v$.

Choose $B'_u\subseteq\iota(u)$ as described in the previous paragraph for each $u\in L_{G,\prec,r}(v)\setminus\{v\}$, and let
$X_v=\bigcup_{u\in L_{G,\prec,r}(v)\setminus\{v\}} B'_u$.
By (b), we have
\begin{align*}
\vol(X_v)&\ge \frac{1}{c}\sum_{u\in L_{G,\prec,r}(v)\setminus\{v\}} \vol(B'_u)\ge \frac{|L_{G,\prec,r}(v)|-1}{cs}\vol(\omega(v))\\
&\ge \frac{|L_{G,\prec,r}(v)|}{2cs}\vol(\omega(v)).
\end{align*}
On the other hand, $X_v \subseteq Y_v$, and by Lemma~\ref{lemma-envel},
$$\vol(X_v) \leq \vol(Y_v) \leq (3+2(r-1)k)^d\vol(T(v)) \le (2k+1)^dd^dr^d\vol(\omega(v)).$$
Therefore, $|L_{G,\prec,r}(v)|\le \delta r^d$, as required.
\end{proof}

In order to prove Lemma~\ref{lemma-color} implies Theorem~\ref{thm-color}, we need to
establish a connection between the relations $\le_{k,s}$ and $\sqsubseteq_s$.

\begin{lemma}\label{lemma-rel1}
Let $B_1$ and $B_2$ be shapes in $\mathbb{R}^d$ and let $k,s\ge 1$ be real numbers.  If
$B_1\le_{k,s} B_2$, then $B_1\sqsubseteq_{\max(k,s)^d} B_2$.
\end{lemma}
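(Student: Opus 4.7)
Given $x \in B_2$, I aim to construct a translate $B_1^*$ of $B_1$ with $x \in B_1^*$ and $\vol(B_1^* \cap B_2) \ge \vol(B_1)/\max(k,s)^d$; this is precisely what $B_1 \sqsubseteq_{\max(k,s)^d} B_2$ demands. First I would upgrade $s$ to $\max(k,s)$ via monotonicity: for any $b_0 \in B_1$ and $s \le k$, convexity of $B_1$ gives $sB_1 + (k-s)b_0 \subseteq kB_1$ (each point $sb + (k-s)b_0$ is $k$ times a convex combination of $b$ and $b_0$), so every translate of $sB_1$ is contained in some translate of $kB_1$, and thus $B_1 \le_{k,s} B_2$ implies $B_1 \le_{k,k} B_2$. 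We may therefore assume $s \ge k$, making $\max(k,s)^d = s^d$.

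Next, using translation invariance of $\le_{k,s}$, I would apply the hypothesis to the translate $\tilde B := kB_2 + (1-k)x$ of $kB_2$, which contains $x$ (since $kx \in kB_2$). This produces translates $B_1'$ of $B_1$ and $B_1''$ of $sB_1$ with $x \in B_1''$ and $B_1' \subseteq B_1'' \cap \tilde B$.

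The decisive step is the contraction $T(z) := x + \tfrac{1}{s}(z-x)$, which fixes $x$ and shrinks by factor $1/s$. Setting $B_1^* := T(B_1'')$ and writing $B_1'' = sB_1 + v$, one computes $B_1^* = B_1 + \tfrac{(s-1)x+v}{s}$, so $B_1^*$ is a translate of $B_1$; since $T(x) = x$ and $x \in B_1''$, also $x \in B_1^*$. Moreover $T(\tilde B) = (1-k/s)x + (k/s)B_2$, and since $s \ge k$ the weights lie in $[0,1]$, so by convexity of $B_2$ we get $T(\tilde B) \subseteq B_2$. Applying $T$ to the containment $B_1' \subseteq B_1'' \cap \tilde B$ yields $T(B_1') \subseteq B_1^* \cap B_2$, and since $T$ scales volumes by $(1/s)^d$, we conclude $\vol(B_1^* \cap B_2) \ge \vol(B_1)/s^d = \vol(B_1)/\max(k,s)^d$.

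The main obstacle is spotting the correct manipulation. The hypothesis gives an ``outside-fit'' of $B_1$ inside the blow-ups $B_1''$ and $kB_2$, whereas the conclusion demands a lower bound on intersection with the un-blown-up $B_2$. The key insight is that the single contraction by $1/s$ toward $x$ simultaneously deflates $B_1''$ back to a translate of $B_1$ and (because $s \ge k$ after the reduction) deflates $kB_2$ back into $B_2$, causing the two inflations to cancel.
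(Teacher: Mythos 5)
Your proof is correct and takes essentially the same route as the paper's: both arguments contract by a factor $1/\max(k,s)$ about the query point $x$, so that the blown-up sets (the translate of $sB_1$ and the $k$-blow-up of $B_2$) shrink back into a translate of $B_1$ and into $B_2$ respectively, costing exactly the volume factor $\max(k,s)^d$. Your preliminary reduction to $s\ge k$ and the explicit contraction map $T$ are cosmetic variations (the paper instead scales both sets by $1/\max(k,s)$ about the origin in one step); note that both your argument and the paper's rely on convexity of $B_1$ and $B_2$ (explicitly in your case, implicitly via $\lambda B\subseteq B$ in the paper's), which the lemma's statement for general shapes does not formally guarantee but which holds in every application.
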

\begin{proof}
Consider any point in $B_2$; without loss of generality, we can translate $B_2$ so that this point is the origin $o$.
Note that $o\in kB_2$, and since $B_1\le_{k,s} B_2$, there exists
a translation $B'_1$ of $B_1$ such that $o\in sB'_1$ and $sB'_1\cap kB_2$ contains a translation of $B_1$,
implying that $\vol(sB'_1\cap kB_2)\ge \vol(B_1)$.
Let $m=\max(k,s)$, so that $\tfrac{s}{m}B'_1\subseteq B'_1$ and $\tfrac{k}{m}B_2\subseteq B_2$.
Note that $o\in B'_1$ and $\vol(B'_1\cap B_2)\ge \vol(\tfrac{s}{m}B'_1\cap \tfrac{k}{m}B_2)=\tfrac{1}{m^d}\vol(sB'_1\cap kB_2)\ge \tfrac{1}{m^d}\vol(B_1)$.
This implies $B_1\sqsubseteq_{m^d} B_2$.
\end{proof}

For a positive integer $d$, let $\gamma_d>0$ denote the maximum $(d-1)$-dimensional volume of an intersection
of a hyperplane with a unit cube in $\mathbb{R}^d$.

\begin{lemma}\label{lemma-rel2}
Let $B_1$ and $B_2$ be shapes in $\mathbb{R}^d$ and let $s\ge 1$ be a real number.
Let $k=sd^{d+3/2}\gamma_d$.  If $B_1\sqsubseteq_s B_2$, then $B_1\le_{k,k} B_2$.
\end{lemma}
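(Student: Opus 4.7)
The plan is to show $B_1\le_{k,k} B_2$ as follows. Given $x\in kB_2$, set $y:=x/k\in B_2$ and apply $B_1\sqsubseteq_s B_2$ at $y$ to obtain a translate $B_1^y$ of $B_1$ with $y\in B_1^y$ and $\vol(B_1^y\cap B_2)\ge \vol(B_1)/s$. Then $B''_1:=kB_1^y$ is a translate of $kB_1$ containing $x=ky$, and $C:=B''_1\cap kB_2=k(B_1^y\cap B_2)$ is a convex set with $\vol(C)\ge k^d\vol(B_1)/s$. The remaining task is to place a translate $B'_1$ of $B_1$ inside $C$; equivalently, to place a translate of $\tfrac{1}{k}B_1$ inside $C':=B_1^y\cap B_2$, a convex subset of $B_1$ of volume at least $\vol(B_1)/s$.

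Both relations $\sqsubseteq_s$ and $\le_{k,k}$ are preserved under invertible affine transformations (volumes scale by a common factor, and the families of translates of $B_1$, $kB_1$, $kB_2$ map bijectively to the corresponding families for the transformed shapes), so I may assume that the envelope $T$ of $B_1$ is the unit cube $[-1/2,1/2]^d$; Lemma~\ref{lemma-envel} then gives $\vol(B_1)\ge 1/d^d$. Since $B_1\subseteq T$, it suffices to exhibit a translate of the cube $\tfrac{1}{k}T$ of side $1/k$ inside $C'$. Such a cube fits inside a ball of diameter $\sqrt{d}/k$, so by Lemma~\ref{lemma-height} it is enough to show that $C'$ has height at least $d^{3/2}/k$ in every direction.

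For the height bound I would use Fubini's theorem in any unit direction $n$: $\vol(C')\le h_n(C')\cdot M_n$, where $h_n(C')$ is the height of $C'$ in direction $n$ and $M_n$ is the maximum $(d-1)$-dimensional volume of a cross-section of $C'$ by a hyperplane perpendicular to $n$. Since $C'\subseteq T$, the definition of $\gamma_d$ gives $M_n\le \gamma_d$, and combining this with $\vol(C')\ge \vol(B_1)/s \ge 1/(sd^d)$ yields $h_n(C')\ge 1/(s\gamma_d d^d)=d^{3/2}/k$ for the choice $k=sd^{d+3/2}\gamma_d$, exactly as needed. The main subtlety is the reduction in the second paragraph: normalizing to the unit-cube envelope is what lets one apply the definition of $\gamma_d$ directly and obtain the precise constant stated in the lemma, rather than a cruder bound on cross-sections of a general parallelepiped that would cost an extra factor of $\sqrt{d}$.
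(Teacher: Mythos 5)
Your proof is correct and follows essentially the same route as the paper's: normalize by an affine map so the unit cube is an envelope of $B_1$, use Lemma~\ref{lemma-envel} to get $\vol(B_1)\ge d^{-d}$, bound the height of $B_1'\cap B_2$ from below via the $\gamma_d$ cross-section estimate, and apply Lemma~\ref{lemma-height} to fit a scaled copy of $B_1$ inside. The only cosmetic difference is that you rescale the point by $1/k$ and work inside $B_2$ directly, while the paper translates the chosen point of $kB_2$ to the origin; the constants come out identically.
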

\begin{proof}
By applying a bijective affine transformation, we can assume that a unit cube $T_1$ is an envelope of $B_1$;
hence, $\vol(B_1)\le 1$, and Lemma~\ref{lemma-envel} implies $\vol(B_1)\ge d^{-d}$.

Consider any point in $kB_2$; without loss of generality, we can translate $B_2$ so that this point is the origin $o$, and thus $o\in B_2$.
Since $B_1\sqsubseteq_s B_2$, by translating $B_1$, we can without loss of generality assume that $o\in B_1$
and $\vol(B_1\cap B_2)\ge \tfrac{1}{s}\vol(B_1)\ge\tfrac{1}{sd^d}$.  Note that any subset of $T_1$ of height $h$
has volume at most $\gamma_dh$, and thus $B_1\cap B_2$ has height at least $\tfrac{1}{sd^d\gamma_d}$.  By Lemma~\ref{lemma-height},
$B_1\cap B_2$ contains a ball of diameter $\tfrac{1}{sd^{d+1}\gamma_d}$.  Since $B_1$ is contained within a ball of diameter $\sqrt{d}$,
we conclude that $sd^{d+3/2}\gamma_d(B_1\cap B_2)=k(B_1\cap B_2)$ contains a translation of $B_1$.  Consequently,
there exists a translation $B'_1$ of $B_1$ such that $B'_1\subseteq kB_2\cap kB_1$, and $o\in kB_1$.
This implies that $B_1\le_{k,k} B_2$.
\end{proof}

Furthermore, we need to relate comparability to the volume of the two bodies.
\begin{lemma}\label{lemma-cmp}
For every real number $s\ge 1$ and positive integer $d$, there exists a real number $s'\ge s$
such that the following claim holds.
Let $B_1$ and $B_2$ be convex shapes in $\mathbb{R}^d$.
If $B_1$ and $B_2$ are $\sqsubseteq_s$-comparable and $\vol(B_1)\le \vol(B_2)$, then $B_1\sqsubseteq_{s'} B_2$.
\end{lemma}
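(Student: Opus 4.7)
The plan is to reduce to the only non-trivial case and then transfer coverage information between $B_1$ and $B_2$. If $B_1 \sqsubseteq_s B_2$ we are done with $s'=s$, so I may assume $B_2 \sqsubseteq_s B_1$ with $\vol(B_1) \le \vol(B_2)$. Since $\sqsubseteq_s$ and the volume ratio are preserved under bijective affine transformations, I would first normalize so that the envelope $T_1$ of $B_1$ is $[-\tfrac12,\tfrac12]^d$; Lemma~\ref{lemma-envel} then gives $B_1 \supseteq p + T_1/d$ for some $p$, and in particular $\vol(B_1) \ge d^{-d}$.

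Two structural bounds on $B_2$ then drive the rest of the argument. First, by Lemma~\ref{lemma-rel2}, $B_2 \sqsubseteq_s B_1$ yields $B_2 \le_{k,k} B_1$ with $k = s d^{d+3/2}\gamma_d$, so up to translation $B_2$ fits inside a cube of side $k$ and $\mathrm{diam}(B_2) \le k\sqrt d$. Second, for any direction $v$ and any $y \in B_1$, the witness translate $B'_2 \ni y$ from $B_2 \sqsubseteq_s B_1$ satisfies $\vol(B_2)/s \le \vol(B'_2 \cap B_1) \le h(B_2,v)\cdot \vol_{d-1}(P_v(B_1))$, where $P_v$ denotes orthogonal projection; the Cauchy projection formula gives $\vol_{d-1}(P_v(T_1)) = \|v\|_1 \le \sqrt d$, so the height of $B_2$ in every direction is at least $\vol(B_2)/(s\sqrt d) \ge 1/(s d^{d+1/2})$. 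Lemma~\ref{lemma-height} then provides a ball $U^* \subseteq B_2$ of diameter $\rho^* \ge 1/(s d^{d+3/2})$.

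The final step is to show $B_1 \le_{\alpha,\beta} B_2$ for $\alpha, \beta$ polynomial in $s,d$ and then invoke Lemma~\ref{lemma-rel1}. For $x \in \alpha B_2$, convexity yields $\mathrm{conv}(\{x\} \cup \alpha U^*) \subseteq \alpha B_2$: taking $\alpha = \Theta(\sqrt d/\rho^*) = \Theta(s d^{d+2})$, either $x$ lies inside $\alpha U^*$, which is then wide enough to contain a translate $B'_1$ of $B_1$, or a cross-section of this cone at a suitable parameter along the line from $x$ to the centre of $\alpha U^*$ has radius at least $\sqrt d/2$ and again contains such a translate $B'_1$. In either case the set $\{x\} \cup B'_1$ has diameter $O(kd/\rho^*)$, which fits inside the axis-aligned cube of side $\beta/d$ inscribed in $\beta B_1$ (Lemma~\ref{lemma-envel} applied to $\beta B_1$) provided $\beta = \Theta(kd^2/\rho^*) = \Theta(s^2 d^{2d+6})$. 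This establishes $B_1 \le_{\alpha,\beta} B_2$, and Lemma~\ref{lemma-rel1} then gives $B_1 \sqsubseteq_{\beta^d} B_2$, with $s'=\beta^d$ depending only on $s$ and $d$.

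The hard part is precisely this transfer from ``translates of $B_2$ cover $B_1$'' to ``translates of $B_1$ cover $B_2$''. The volume hypothesis is essential here: without $\vol(B_1) \le \vol(B_2)$, $B_2$ could be arbitrarily thin in some direction, and both the height lower bound and the cone construction that embeds $B_1$ inside $\alpha B_2$ would collapse. A secondary subtlety is that convex sets do not in general contain arbitrary subsets of smaller diameter, which is why the axis-aligned inscribed cube provided by Lemma~\ref{lemma-envel} is crucial for housing $\{x\} \cup B'_1$ inside a translate of $\beta B_1$.
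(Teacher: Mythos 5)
Your proof is correct, but it takes a genuinely different and considerably longer route than the paper's. Both arguments start identically: reduce to the case $B_2\sqsubseteq_s B_1$ and apply Lemma~\ref{lemma-rel2} to obtain $B_2\le_{k,k}B_1$ and hence $B_2\le_k B_1$. At that point the paper is essentially finished: translate so that the chosen point of $B_2$ is the origin $o$ and so that $B_2\subseteq kB_1$; since dilation is centered at $o$, this already forces $o\in B_1$, and
$$\vol(B_1\cap B_2)=k^{-d}\vol(kB_1\cap kB_2)\ge k^{-d}\vol(kB_1\cap B_2)=k^{-d}\vol(B_2)\ge k^{-d}\vol(B_1),$$
the last inequality being the only place the volume hypothesis enters. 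Equivalently, the homothety with center $o$ and ratio $1/k$ maps $kB_1\supseteq B_2$ onto a translate of $B_1$ containing $o$ while mapping $B_2$ into itself, which verifies $B_1\sqsubseteq_{\max(s,k^d)}B_2$ directly. You instead extract a lower bound on the width of $B_2$ in every direction from the coverage condition (via the Cauchy projection formula), obtain a large inscribed ball from Lemma~\ref{lemma-height}, run a cone construction to establish $B_1\le_{\alpha,\beta}B_2$, and convert back with Lemma~\ref{lemma-rel1}. Each of these steps is sound (modulo instantiating the $\Theta(\cdot)$ constants and replacing $\alpha$ by $\max(\alpha,1)$, since the definition of $\le_{\alpha,\beta}$ and Lemma~\ref{lemma-rel1} require parameters at least $1$), and your route has the mild virtue of producing the intermediate statement $B_1\le_{\alpha,\beta}B_2$ explicitly; but it yields much worse constants, and the scaling identity $k(B_1\cap B_2)=kB_1\cap kB_2$ about the chosen point is the shortcut that makes the width, inscribed-ball, and cone machinery unnecessary.
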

\begin{proof}
Let $k\ge 1$ be as in Lemma~\ref{lemma-rel2}, and let $s'=\max(s,k^d)$.
Since $B_1$ and $B_2$ are $\sqsubseteq_s$-comparable, it suffices to consider the case
that $B_2\sqsubseteq_s B_1$.  By Lemma~\ref{lemma-rel2}, we have $B_2\le_{k,k} B_1$. So $B_2\le_{k} B_1$ as well,
and thus we can assume that $B_2\subseteq kB_1$.  Consider any point of $B_2$, without loss
of generality the origin $o$.  Then $o\in B_1$ and since $B_2\subseteq kB_2$, we have
$\vol(B_1\cap B_2)=k^{-d} \vol(kB_1\cap kB_2)\ge k^{-d}\vol(kB_1\cap B_2)=k^{-d}\vol(B_2)\ge k^{-d}\vol(B_1)$.
This implies $B_1\sqsubseteq_{s'} B_2$.
\end{proof}

Theorem~\ref{thm-color}  now easily follows.

\begin{proof}[Proof of Theorem~\ref{thm-color}]
For every $v\in V(G)$, let $\iota(v)=\omega(v)=\varphi(v)$.  It suffices to verify that the assumptions of Lemma~\ref{lemma-color} hold.
The assumptions (a) and (d) clearly hold, and (b) holds since $\varphi$ is $c$-thin.  If $u\prec v$, then $\vol(\varphi(v))\le\vol(\varphi(u))$,
and thus $\omega(v)=\varphi(v)\sqsubseteq_{s'}\varphi(u)=\iota(u)$ by Lemma~\ref{lemma-cmp}, and 
$\omega(v)=\varphi(v)\le_{k'}\varphi(u)=\omega(u)$ by Lemma~\ref{lemma-rel2}, for real numbers $s', k'\ge 1$ depending only on $c$, $s$, and $d$;
hence, (c) holds.
\end{proof}

\section{Representability and aspect ratio}\label{sec-AR}
Let $G_1 \boxtimes G_2$ denote the strong product of two graphs $G_1$ and $G_2$. For positive integers $r$ and $t$, let $T_r$ denote the star with $r+1$ vertices and $P_t$ the path with $t$ vertices. In this section we prove that $T_r \boxtimes P_t$ has a $(4, \sqsubseteq_1)$-tame representation by boxes in $\bb{R}^3$ while, for all positive integers $d$, $c$, and $k$, and $r$ and $t$ sufficiently large, it does not admit a $c$-thin intersection representation in $\bb{R}^d$ by convex shapes of aspect ratio at most $k$. The representation in the first lemma is depicted in Figure~\ref{fig:PT}.

\begin{lemma}
For all positive integers $r$ and $t$, the graph $T_r \boxtimes P_t$ admits a $(4, \sqsubseteq_1)$-tame $S$-shaped representation, where $S$ is the set of boxes in $\bb{R}^3$.
\end{lemma}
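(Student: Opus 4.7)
The plan is to exhibit an explicit box representation. Label the vertices of $T_r$ as $c,\ell_1,\ldots,\ell_r$ with $c$ the center, and the vertices of $P_t$ as $1,\ldots,t$. In $\bb{R}^3$, for $i\in\{1,\ldots,t\}$ and $j\in\{1,\ldots,r\}$, set
\[
\varphi(c,i)=[0,2r-1]\times[0,1]\times[i-1,i], \qquad \varphi(\ell_j,i)=[2j-2,2j-1]\times[\tfrac12,\tfrac32]\times[i-1,i].
\]
The $z$-coordinate encodes position along $P_t$, consecutive layers sharing a single horizontal plane $z=i$; the $y$-coordinate separates leaves from centers while leaving a slab $y\in[\tfrac12,1]$ in which a leaf meets a center of the same or an adjacent layer; and the unit gaps in $x$ keep distinct leaves disjoint.

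The remaining work is four routine checks. \emph{Convex shapes}: each $\varphi(v)$ is a box, hence a convex shape in $\bb{R}^3$. \emph{Correct adjacency}: within a fixed layer, any two leaves have disjoint $x$-projections and hence disjoint boxes, while each leaf meets the center in the strip $y\in[\tfrac12,1]$; across layers $i$ and $i+1$ the plane $z=i$ combined with the horizontal picture reproduces exactly the edges demanded by the strong product (in particular $(\ell_j,i)$ and $(\ell_{j'},i+1)$ with $j\neq j'$ still have disjoint $x$-projections); and for $|i-i'|\ge 2$ the $z$-projections are disjoint, so no boxes meet. \emph{$4$-thinness}: at a point strictly interior to a layer, at most one center box and one leaf box contain it; on a boundary plane $z=i$ this doubles to at most four, and the disjointness of the leaves' $x$-projections prevents any further inflation. \emph{Pairwise $\sqsubseteq_1$-comparability}: every center box has dimensions $(2r-1)\times 1\times 1$ and every leaf box has dimensions $1\times 1\times 1$, so any two boxes of the same type are congruent and each leaf has all three side lengths at most the corresponding side lengths of each center, giving $\varphi(\ell_j,i)\sqsubseteq_1\varphi(c,i')$ for all $i,i',j$.

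The only delicate point is reconciling the second and third checks: the $x$-spacing between leaves must be large enough that no two leaves (in any of the $t$ layers) intersect, yet the $y$-slab containing the leaves must overlap the centers so that leaf--center adjacencies persist across consecutive layers. Unit spacing in $x$ accomplishes both, and as a bonus it prevents three or more leaf boxes from meeting at a common point, so the $4$-thinness bound is tight on the planes $z=i$ but never exceeded.
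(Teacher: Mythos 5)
Your construction is correct, and it is essentially the paper's proof made explicit: the paper represents $T_r$ in $\bb{R}^2$ (one large square/rectangle for the hub, unit squares for the leaves, pairwise $\sqsubseteq_1$-comparable) and $P_t$ in $\bb{R}$ by consecutive unit intervals, then takes the Cartesian product of the two representations — which is exactly what your coordinates $[0,2r-1]\times[0,1]$ versus $[2j-2,2j-1]\times[\tfrac12,\tfrac32]$ crossed with $[i-1,i]$ spell out, with thinness $2\cdot 2=4$ and $\sqsubseteq_1$-comparability inherited coordinatewise. All four of your checks go through, so the argument is complete.
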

\begin{proof}
Let $S_1$ and $S_2$ be axis-aligned squares in $\bb{R}^2$ of heights $2r$ and $1$, respectively. The graph $T_r$ has a $2$-thin $\{S_1, S_2\}$-shaped intersection representation $\varphi_1$. Similarly, where $I$ denotes the unit interval, the graph $P_t$ has a $2$-thin $\{I\}$-shaped intersection representation $\varphi_2$. It follows that $\varphi_1 \times \varphi_2$ is a $(4, \sqsubseteq_1)$-tame $\{S_1 \times I, S_2 \times I\}$-shaped representation of the graph $T_r \boxtimes P_t$, where $\times$ denotes the Cartesian product.
\end{proof}

\begin{figure}
\centering
\includegraphics[scale=.7]{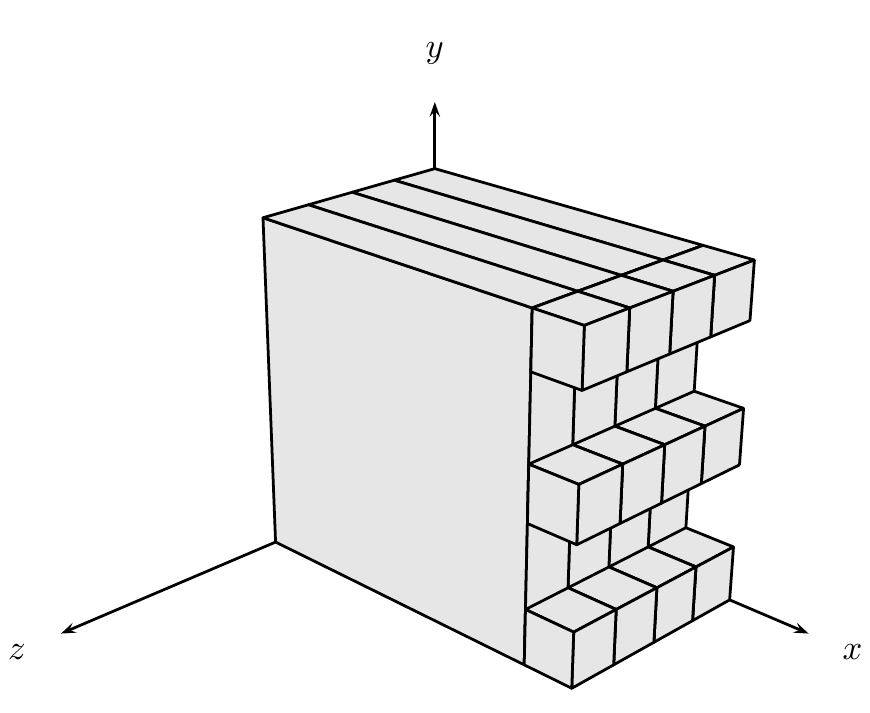}
\caption{An intersection representation of $T_3 \boxtimes P_4$.}
\label{fig:PT}
\end{figure}

Recall that the \textit{diameter} of a convex shape is the largest distance between any two points contained in it. We will use the fact that for any convex shape of diameter $\rho$, there is a ball of radius $\rho$ containing it. We will also use the following correspondence between aspect ratio and fatness; recall that a convex shape $Q$ is \textit{$k'$-fat} if every ball $B$ with center in $Q$ satisfies $\vol(B \cap Q) \geq \min \left( \vol(B)/k', \vol(Q) \right)$. From Lemma~\ref{lemma-height} and Lemma~2.4 of van der Stappen, Halperin, and Overmars~\cite{vanDerStappen1993}, we obtain the following.

\begin{lemma}
\label{lem:AR}
For all positive integers $d$ and $k$, there exists an integer $k'$ so that every convex shape in $\bb{R}^d$ with aspect ratio at most $k$ is $k'$-fat.
\end{lemma}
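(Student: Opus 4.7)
The plan is to chain the already-proved Lemma~\ref{lemma-height} with the cited Lemma~2.4 of~\cite{vanDerStappen1993}, using the aspect-ratio hypothesis to produce an inscribed ball in $Q$ whose radius is comparable to that of a circumscribed ball.

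First, I would let $Q$ be a convex shape in $\bb{R}^d$ of diameter $\rho$ and height $h$, with aspect ratio $\rho/h \le k$; this rearranges to $h \ge \rho/k$. Lemma~\ref{lemma-height} would then furnish an inscribed ball $B_{\mathrm{in}} \subseteq Q$ of diameter at least $h/d \ge \rho/(kd)$. For the other direction, the fact recalled in the paragraph immediately preceding the lemma places $Q$ inside a ball $B_{\mathrm{out}}$ of radius $\rho$. Consequently the ratio of the radii of $B_{\mathrm{out}}$ and $B_{\mathrm{in}}$ is at most $2kd$, a quantity depending only on $k$ and $d$.

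The second step is to feed this bounded radius ratio into Lemma~2.4 of~\cite{vanDerStappen1993}, whose content is precisely that a convex body sandwiched between an inscribed and a circumscribed ball of bounded radius ratio is $k'$-fat, with $k'$ a function of that ratio and the ambient dimension. Plugging in the ratio $2kd$ produces a real $k'(d,k)$; rounding up if necessary gives the required integer, so the lemma follows.

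I do not expect a genuine obstacle. The only point needing mild care is matching the exact phrasing of Lemma~2.4 of~\cite{vanDerStappen1993}: depending on the reference's conventions, the statement may be packaged in terms of aspect ratio, or in terms of inscribed/circumscribed radii, or in terms of a ``reference ball'' inside the body. All of these formulations are equivalent up to constants depending only on $d$, so whichever form is used, the inscribed ball $B_{\mathrm{in}}$ and the diameter bound from $B_{\mathrm{out}}$ supplied by the first step are exactly the inputs it requires.
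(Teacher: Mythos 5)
Your proposal is correct and follows exactly the route the paper intends: the paper gives no detailed argument, simply stating that the lemma follows from Lemma~\ref{lemma-height} together with Lemma~2.4 of~\cite{vanDerStappen1993}, which is precisely the chain you spell out (aspect ratio bounds the height from below, Lemma~\ref{lemma-height} yields an inscribed ball of radius at least $\rho/(2kd)$, the diameter bound yields a circumscribed ball of radius $\rho$, and the cited lemma converts the bounded radius ratio into $k'$-fatness).
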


We are ready to prove the main lemma of this section.

\begin{lemma}
For all positive integers $d$, $c$, and $k$, there exist positive integers $r$ and $t$ so that $T_r \boxtimes P_t$ does not admit a $c$-thin $S$-shaped intersection representation,
where $S$ is the set of all convex shapes in $\bb{R}^d$ with aspect ratio at most $k$.
\end{lemma}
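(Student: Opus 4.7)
The plan is to argue by contradiction: suppose $G = T_r \boxtimes P_t$ admits a $c$-thin $S$-shaped intersection representation $\varphi$ in $\bb{R}^d$, where $S$ is the set of convex shapes of aspect ratio at most $k$. The aim is to show that this is impossible when $r$ and $t$ are sufficiently large in terms of $c,d,k$. By Lemma~\ref{lem:AR}, every shape $\varphi(v)$ is $k'$-fat for some $k' = k'(d,k)$. The classical separator theorems for $c$-thin intersection graphs of $k'$-fat convex shapes in $\bb{R}^d$ (e.g.,~\cite{smith1998geometric,har2015approximation}, as cited in the introduction) then imply that $G$ and all of its subgraphs admit balanced separators of size at most $\beta n^{1-1/d}$, for some $\beta = \beta(c,d,k',k)$.

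The key local observation is the following. Fix a position $p$ with $2 \leq p \leq t-1$. Since $u_{p-1}u_{p+1} \notin E(G)$, the convex shapes $A := \varphi(u_{p-1})$ and $C := \varphi(u_{p+1})$ are disjoint, so $d_p := d(A,C) > 0$. On the other hand, each of the $r$ leaves $\ell_{1,p},\ldots,\ell_{r,p}$ is adjacent in $G$ to both $u_{p-1}$ and $u_{p+1}$, forcing each $\varphi(\ell_{i,p})$ to contain a segment from $A$ to $C$; in particular, $\operatorname{diam}(\varphi(\ell_{i,p})) \geq d_p$. Applying Lemma~\ref{lemma-height} together with the aspect-ratio bound yields $\vol(\varphi(\ell_{i,p})) \geq c_0 d_p^d$ for a constant $c_0 = c_0(d,k)$. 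A volume and packing argument using the $c$-thinness and the pairwise disjointness of the $r$ leaves at position $p$, together with the observation that each leaf intersects $\varphi(u_p)$, then gives a bound of the form $r \leq f(c,d,k) \cdot (\operatorname{diam}(\varphi(u_p))/d_p)^d$.

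The main obstacle is that this local bound is useful only when the ratio $\operatorname{diam}(\varphi(u_p))/d_p$ is controlled, which a priori it need not be (for instance, $\varphi(u_p)$ could be much larger than the gap $d_p$ between its non-adjacent neighbours). To overcome this, I plan to combine the local packing bound with the global path structure of the spine. For $t$ sufficiently large, either some position $p$ has a controlled ratio $\operatorname{diam}(\varphi(u_p))/d_p$, in which case the local bound above forces $r$ to be bounded in terms of $c,d,k$; or the diameters of the $\varphi(u_p)$ must grow so quickly along the spine that the resulting shapes cannot be accommodated in $\bb{R}^d$ under both the $c$-thinness and the $O(n^{1-1/d})$-separator bound coming from the fat-set representation. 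Either alternative produces a contradiction once $r$ and $t$ are chosen sufficiently large in terms of $c,d,k$, which completes the proof.
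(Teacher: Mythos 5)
Your local packing observation is sound and is close in spirit to the final step of the paper's argument, but the global part of your plan has a genuine gap, and the fallback branch of your dichotomy cannot work. The problem is that $T_r \boxtimes P_t$ (and every subgraph of it) genuinely \emph{does} have small balanced separators: deleting the $t$ spine vertices $\pi_T^{-1}(c)$ leaves $r$ disjoint paths of $t$ vertices each, so the graph has treewidth at most $t+1$ and balanced separators of size $t$, which is far below $\beta n^{1-1/d}$ once $r$ is large compared to $t$ (and in any correct choice of parameters $t$ is bounded in terms of $c,d,k$ while $r$ is enormous). Consequently, no contradiction can ever be extracted from the $O(n^{1-1/d})$-separator theorem for fat objects; your second alternative (``the diameters grow so fast that the separator bound is violated'') is vacuous. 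That leaves the first alternative, ``some position $p$ has controlled ratio $\operatorname{diam}(\varphi(u_p))/d_p$,'' entirely unproved --- and this is precisely the crux of the problem, since a priori the spine shapes could be arbitrarily large relative to the gaps between consecutive non-adjacent spine shapes.

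The paper closes exactly this gap with a different mechanism. It fixes the spine vertex $x$ whose shape has \emph{minimum} diameter $\rho$, encloses $\varphi(x)$ in a ball $B$ of radius $\rho$, and then propagates along the path: by induction on the distance $b$ from the position of $x$, at least $r_b$ of the leaves at that position have shapes meeting $\bigl(1+\tfrac{b}{t}\bigr)B$, where $r_0 > r_1 > \cdots > r_t$ is a carefully chosen decreasing sequence. The induction step uses your local idea quantitatively: a leaf whose successor escapes the next, slightly larger ball must have diameter at least $\rho/t$, hence (by aspect ratio, Lemma~\ref{lemma-height}, and fatness via Lemma~\ref{lem:AR}) volume comparable to $\vol(B)/(2tkd)^d$ concentrated inside $\bigl(1+\tfrac{b+1}{t}\bigr)B$, and $c$-thinness bounds how many such leaves there can be. The conclusion is that \emph{every} spine shape meets $2B$; since each has diameter at least $\rho$ by the minimality of $x$, each contributes volume $\Omega(\vol(B))$ inside $3B$, and $c$-thinness then bounds $t$ --- contradiction. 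If you want to repair your proof, you need to replace the separator-based branch with an argument of this propagation type that forces all spine shapes into a bounded blow-up of the smallest one; the star's $r$ leaves are what make the propagation survive $t$ steps, which is why $r$ must be chosen as a rapidly growing function of $t$.
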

\begin{proof}
Let $k'$ denote the integer from Lemma~\ref{lem:AR}. Define\begin{align*}
    t &= ck'(6kd)^d+1, \\
    r_i&=\begin{cases}
            1&\text{if }i=t,\\
            ck'(t+i+1)^d(2kd)^d+r_{i+1}& \text{if }i\in\{0,1,\ldots,t-1\}\textrm{, and }
            \end{cases}\\
    r &= r_0 .
\end{align*}
\noindent Suppose for a contradiction that $T_r \boxtimes P_t$ has a $c$-thin $S$-shaped intersection representation $\varphi$.

Let $\pi_P:V(T_r \boxtimes P_t)\to V(P_t)$ and $\pi_T:V(T_r \boxtimes P_t)\to V(T_r)$ be the projections to the product factors.
Let $c$ be the center of the star $T_r$ and let $X=\pi_T^{-1}(c)$ denote the set of the $t$ vertices of $T_r \boxtimes P_t$ of maximum degree.
Let $x$ be a vertex in $X$ with the diameter $\rho$ of $\varphi(x)$ minimum. There is a ball $B$ of radius $\rho$ which contains $\varphi(x)$; by translating we may assume that $B$ is centered at the origin.
Let $v_1$, \ldots, $v_t$ be the vertices of $P_t$ in order.  For $i\in\{1,\ldots,t\}$, let $Y_i=\pi_P^{-1}(v_i)\setminus X$ be the set of vertices
of $T_r \boxtimes P_t$ corresponding to $v_i$ and to the rays of $T_r$. Let $a$ be the index such that $v_a=\pi_P(x)$.

For $i\in\{1,\ldots,t\}$, let $Y'_i\subseteq Y_i$ consist of the vertices $u\in Y_i$ such that $\varphi(u)$ has
a non-empty intersection with $\left(1+\frac{|i-a|}{t}\right)B$.
We will show by induction on $b=|i-a|$ that $|Y'_i|\ge r_b$ for $i\in\{1,\ldots,t\}$.
Certainly this is true for $b=0$; now suppose for a contradiction the claim holds for $b$ but not $b+1$;
by symmetry, it suffices to consider the case that $i>a$.  Each vertex $v\in Y'_i$ has a unique neighbor $v'\in Y_{i+1}$,
and since $\varphi(v)$ intersects $\left(1+\frac{b}{t}\right)B$, if $\varphi(v')$ is disjoint from $\left(1+\frac{b+1}{t}\right)B$,
then $\varphi(v)$ must have diameter at least $\rho/t$.  Consequently,
more than $r_{b}-r_{b+1}$ vertices $v \in Y'_{i}$ are mapped to shapes of diameter at least $\rho/t$.
So the height of $\varphi(v)$ is at least $\tfrac{\rho}{tk}$, and by Lemma~\ref{lemma-height},
we conclude $\vol(\varphi(v)) \geq \vol(B)/(2tkd)^d$. So, by Lemma~\ref{lem:AR}, a ball $B_v$ of radius $\rho/t$ centered at
a point in $\varphi(v) \cap \left(\left(1+\frac{b}{t} \right)B\right)$ satisfies
$$
    \vol(B_v \cap \varphi(v)) \geq \min \left(\frac{\vol(B)}{k't^d},\frac{\vol(B)}{(2tkd)^d} \right) \geq \frac{\vol(B)}{k'(2tkd)^d}.
$$
\noindent Thus, as $B_v \subseteq \left(1+\frac{b+1}{t}\right)B$ and $\varphi$ is $c$-thin, summing over all such vertices $v\in Y_i$, we obtain 
$$
    c\left(1+\frac{b+1}{t}\right)^d\vol(B) > \frac{(r_b-r_{b+1})\vol(B)}{k'(2tkd)^d}.
$$\noindent This is a contradiction to the definition of $r_b$. 
Therefore, $|Y'_{i+1}|\ge r_{b+1}$.

One final application of the same argument shows that every vertex $z \in X$ is mapped to a shape $\varphi(z)$ with non-empty intersection with $2B$.
By the choice of $x$, $\vol(\varphi(z))\geq \vol(\varphi(x))$.  Since $\varphi(x)$ has diameter $\rho$, it has height at least
$\rho/k$, and by Lemma~\ref{lemma-height} we conclude $\vol(\varphi(x))\geq\vol(B)/(2kd)^d$.
Lemma~\ref{lem:AR} implies a ball $B_z$ of radius $\rho$ centered at a point in $\varphi(z) \cap (2B)$ satisfies
$$
    \vol(B_z \cap \varphi(z)) \geq \min \left(\frac{\vol(B)}{k'},\frac{\vol(B)}{(2kd)^d} \right) \geq \frac{\vol(B)}{k'(2kd)^d}.
$$
Then, as each of these balls $B_z$ is contained in $3B$, we obtain
$$
    c3^d\vol(B) \geq \frac{t\vol(B)}{k'(2kd)^d},
$$
which is a contradiction to the definition of $t$. This completes the proof.
\end{proof}

\section{Non-representable example}\label{sec-converse}

This section is occupied by the proof of Theorem~\ref{thm-converse}.
 
For a box $B=I_1 \times I_2 \times \ldots \times I_d \subseteq \bb{R}^d$ and $1 \leq i \leq d$, let $\pi_i(B)=I_i$ denote the projection of $B$ on to the $i$-th coordinate axis.   We start with an easy and standard  lemma.
\begin{lemma}\label{lemma-boxes} Let $\mc{B}$ be a collection of boxes in $\bb{R}^d$, and let $n=|\mc{B}|$. Then for any positive integer $k$, either \begin{description}
		\item[(i)] there exists $\mc{B}' \subseteq \mc{B}$ with $|\mc{B'}|=k$ and $i\in\{1,\ldots,d\}$ such that the projections $\{\pi_i(B)\}_{B \in \mc{B}'}$ are pairwise disjoint, or
		\item[(ii)] there exists $\mc{B}' \subseteq \mc{B}$ with $|\mc{B'}| \geq n/k^d$ such that $\cap_{B \in \mc{B}'}B \neq \emptyset$
	\end{description}
\end{lemma}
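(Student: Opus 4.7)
The plan is to iterate a one-dimensional fact across the $d$ coordinates.

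The key one-dimensional fact I would need is: for any family of $n$ intervals on the real line and any positive integer $k$, either $k$ of the intervals are pairwise disjoint, or at least $n/k$ of them share a common point. To prove this, I would let $G$ be the intersection graph of the intervals and use that interval graphs are chordal and hence perfect; by the perfect graph theorem, $\overline{G}$ is also perfect. A clique in $\overline{G}$ is a set of pairwise disjoint intervals, while an independent set in $\overline{G}$ is a set of pairwise intersecting intervals, which share a common point by Helly's theorem on the line. If no $k$ intervals are pairwise disjoint then $\omega(\overline{G})\le k-1$, hence $\chi(\overline{G})\le k-1$, and pigeonhole yields a color class of size at least $n/(k-1)\ge n/k$ whose intervals share a common point. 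A direct sweep-line argument (peeling off cliques around the leftmost right-endpoint) would work equally well and avoid the perfect-graph machinery.

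Then I would apply this fact coordinate by coordinate. Setting $\mc{B}_0=\mc{B}$, I would maintain inductively a sub-family $\mc{B}_{i-1}\subseteq\mc{B}$ of size at least $n/k^{i-1}$ together with points $p_1,\ldots,p_{i-1}$ such that $p_j\in\pi_j(B)$ for every $B\in\mc{B}_{i-1}$ and every $j<i$. Applying the one-dimensional fact to $\{\pi_i(B):B\in\mc{B}_{i-1}\}$, either we find $k$ boxes in $\mc{B}_{i-1}\subseteq\mc{B}$ whose $i$-th projections are pairwise disjoint, immediately giving case (i); or we obtain $\mc{B}_i\subseteq\mc{B}_{i-1}$ of size at least $|\mc{B}_{i-1}|/k\ge n/k^i$ and a point $p_i\in\bb{R}$ lying in $\pi_i(B)$ for every $B\in\mc{B}_i$. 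After $d$ iterations, the family $\mc{B}'=\mc{B}_d$ has size at least $n/k^d$; since a box is the product of its coordinate projections, the point $(p_1,\ldots,p_d)$ lies in every $B\in\mc{B}'$, yielding case (ii).

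The only substantive step is the one-dimensional fact, and the coordinate iteration is routine bookkeeping, so I do not anticipate any real obstacle.
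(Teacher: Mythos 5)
Your proposal is correct and matches the paper's argument: the paper also reduces to the one-dimensional dichotomy (proved via perfectness of interval graphs plus Helly on the line) and then applies it coordinate by coordinate, packaged as an induction on $d$ rather than an explicit iteration. No gaps.
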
	
\begin{proof}
	The proof is by induction on $d$. For the base case, note that the intersection graph of intervals in $\bb{R}$ is perfect, and so if (i) does not hold then there exists   $\mc{B}' \subseteq \mc{B}$ with $|\mc{B'}| \geq n/k$ such that the intervals in $\mc{B}'$ have pairwise non-empty intersections. By Helly's property, it follows that $\mc{B}'$ satisfies (ii).
	
	For the induction step, for every $B \in \mc{B}$, let $\psi(B) \subseteq \bb{R}^{d-1}$ be such that $B=\psi(B) \times \pi_d(B)$. By the induction hypothesis applied to $\{\psi(B)\}_{B \in \mc{B}}$ either (i) holds, or there exists $\mc{B}'' \subseteq \mc{B}$ with $|\mc{B}''| \geq n/k^{d-1}$ such that  $\cap_{B \in \mc{B}''}\psi(B)  \neq \emptyset$. Applying the base case to $\mc{B}''$, we deduce that the lemma holds for $\mc{B}$.
\end{proof}

We also need the following lemma on interval representations.
For a closed interval $I = [a+b,a-b]$ and $k > 0$, let $k * I = [a+kb, a-kb]$, and let $|I|=2b$ be the length of $I$.
\begin{lemma}\label{lemma-combip}
Let $\mc{U}$ and $\mc{V}$ be finite sets of pairwise disjoint closed intervals of non-zero length,
and let $l$ and $s'$ be positive integers such that
$I \cap (l * J) \neq \emptyset$ and $|J|\le s'|I|$ for every $I\in \mc{U}$ and $J\in \mc{V}$.
If $|\mc{U}|\ge s'+6$, then $|\mc{V}|\le 2s'l^2$.
\end{lemma}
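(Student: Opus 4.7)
My plan is to enumerate $\mc{U}$ from left to right as $I_1 < I_2 < \cdots < I_m$, write $I_i = [a_i - r_i, a_i + r_i]$ with $r_i > 0$, and to bound the half-length $b$ of an arbitrary $J = [c-b, c+b] \in \mc{V}$ both above and below; a length argument on the pairwise-disjoint collection $\mc{V}$ will then conclude. Applying the hypothesis $|J| \le s'|I_i|$ to every $i$ immediately yields the uniform upper bound $b \le s'\rho$, where $\rho := \min_i r_i$.

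For the lower bound and location of $J$, I would fix any two indices $p < q$ in $\{1,\ldots,m\}$: since each of $I_p$ and $I_q$ meets the interval $l * J$ and $l * J$ is itself an interval, it must span from $I_p$ to $I_q$. Setting $D_{p,q} := (a_q - r_q) - (a_p + r_p) \ge 0$ (nonnegative by disjointness), this yields
$$
2lb \;\ge\; D_{p,q}, \qquad c \in [a_q - r_q - lb,\; a_p + r_p + lb],
$$
and consequently $J \subseteq [a_q - r_q - (l+1)b,\; a_p + r_p + (l+1)b]$. Substituting $b \le s'\rho$ at the endpoints, every $J \in \mc{V}$ lies inside the fixed window
$$
W_{p,q} := [a_q - r_q - (l+1)s'\rho,\; a_p + r_p + (l+1)s'\rho]
$$
of length $2(l+1)s'\rho - D_{p,q}$.

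The key choice is $p = 1$, $q = 3$, which is permissible since $|\mc{U}| \ge s' + 6 \ge 3$. Because $I_1, I_2, I_3$ are pairwise disjoint closed intervals and $I_2$ strictly separates $I_1$ from $I_3$, we obtain $D_{1,3} > 2r_2 \ge 2\rho$. The members of $\mc{V}$ are pairwise disjoint, all contained in $W_{1,3}$, and each has length $|J| = 2b \ge D_{1,3}/l$, so
$$
|\mc{V}| \cdot \frac{D_{1,3}}{l} \;\le\; |W_{1,3}| \;=\; 2(l+1)s'\rho - D_{1,3}.
$$
Rearranging to $(|\mc{V}| + l)\,D_{1,3} \le 2l(l+1)s'\rho$ and applying $D_{1,3} \ge 2\rho$ gives $|\mc{V}| \le l(l+1)s' - l$, which is at most $2s'l^2$ since $ls' - l \le l^2 s'$ for all $l, s' \ge 1$.

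The only real subtlety lies in the choice of pair $(p,q)$: the naive choice $(1,m)$ would make the lower bound on $D_{p,q}$ degrade when $I_1$ or $I_m$ happens to be the widest interval in $\mc{U}$, whereas picking two intervals that enclose a third one automatically guarantees $D_{p,q} \ge 2r_{\text{middle}} \ge 2\rho$. Once this is in place, the rest is routine interval arithmetic.
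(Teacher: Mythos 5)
Your proof is correct, and it takes a genuinely different (and in fact stronger) route than the paper's. The paper also reduces to a packing argument --- disjoint intervals of $\mc{V}$ with lengths bounded below, squeezed into a window of bounded length --- but it obtains the length lower bound $|J|\ge \mu/l$ by first showing that each $J$ must lie entirely to one side of the second-leftmost or second-rightmost interval of $\mc{U}$ (this is where the hypothesis $|\mc{U}|\ge s'+6$ is used, via the observation that a single $J$ can meet at most $s'+2$ members of $\mc{U}$), then splitting $\mc{V}$ into two halves accordingly and bounding the span of each half by $l|J_0|$ for its extreme element $J_0$. You instead extract everything from the single fact that $l*J$ must simultaneously meet $I_1$ and $I_3$, which are separated by $I_2$: this gives both the uniform lower bound $|J|\ge D_{1,3}/l\ge 2\rho/l$ and, combined with $b\le s'\rho$, an explicit window $W_{1,3}$ of length $2(l+1)s'\rho-D_{1,3}$ containing all of $\mc{V}$ at once, with no case split and no appeal to where $J$ sits relative to $\mc{U}$. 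A pleasant consequence is that your argument only needs $|\mc{U}|\ge 3$ rather than $|\mc{U}|\ge s'+6$, so it proves a slightly stronger statement with the same conclusion $|\mc{V}|\le l(l+1)s'-l\le 2s'l^2$; all the individual steps (the bound $2lb\ge D_{p,q}$, the containment $J\subseteq W_{p,q}$, the separation $D_{1,3}>2r_2\ge 2\rho$, and the final rearrangement) check out.
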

\begin{proof}
Let $\mu = \min_{I \in \mc{U}}|I|>0$.  Note that $|J|\le s'\mu$ for every $J\in \mc{V}$.
Since the intervals in $\mc{U}$ are pairwise disjoint, every interval in $\mc{V}$ intersects at most $s'+ 2$ intervals in $\mc{U}$. 

Let $I'$ and $I''$ be the second from the left and the second from the right intervals in  $\mc{U}$, respectively. As $|\mc{U}| \geq  s'+6$,
at least $s'+2$ intervals in $\mc{U}$ lie in between $I'$ and $I''$.
Therefore every interval in $\mc{V}$ either lies strictly to the right of $I'$, or strictly to the left of $I''$.
Without loss of generality we assume that there exists $\mc{V}' \subseteq \mc{V}$ such that $|\mc{V}'| \geq |V|/2$,
and every interval in $\mc{V}'$ lies strictly to the right of $I'$.
Let $I$ be the leftmost interval in $\mc{U}$. Then $I'$ separates $I$ from every interval in $\mc{V}'$,
and since $I \cap (l * J) \neq \emptyset$, we have $|J| \geq |I'|/l \geq \mu/l$ for every $J \in \mc{V}'$.
Finally, let $J_0$ be the rightmost interval in $\mc{V}'$. Since $I \cap (l * J_0) \neq \emptyset$,
we have $$l|J_0| \geq \sum_{J \in \mc{V}'} |J| \geq  \frac{|\mc{V}|}{2} \cdot \frac{\mu}{l},$$
and thus $|\mc{V}|\le 2l^2\tfrac{|J_0|}{\mu}\le 2l^2s'$.
\end{proof}

We continue by describing a construction of the graph that will satisfy Theorem~\ref{thm-converse}.
For a graph $G$, and positive integers $N$ and $l$, let the graph $G^{+}=G^{+}(N,l)$ be obtained from $G$ by adding vertices $v_1,v_2,\ldots,v_N$ and pairwise internally vertex disjoint paths $\{P_{i,u}\}_{i \in [N],u \in V(G)}$ each of length $l$, such that $P_{i,u}$ has ends $v_i$ and $u$ for every $i \in [N],u \in V(G)$. We say that $v_1,\ldots,v_N$ are the \emph{hubs} of $G^{+}$, and the $N$ connected components of $G^{+} \setminus V(G)$ are the \emph{stars} of $G^{+}$. Let $N_0 ,N_1, \ldots, N_d$ and $l_1,\ldots,l_d$ be positive integers. Let $G_0$ be an edgeless graph on $N_0$ vertices, let $G_i=G^{+}_{i-1}(N_i,l_i)$ for $1 \leq i \leq d$, and let $G=G_d$.

We show that if $N_0 \ll l_1 \ll N_1 \ll l_2 \ll \ldots l_d \ll N_d$ are sufficiently large with respect to 
parameters $c,s,d$ and $f$ of Theorem~\ref{thm-converse} then $G$ satisfies the conditions (i) and (ii).
The property (i) follows by a repeated application of the following claim (noting that $|V(G_{i-1})|$ only depends
on $N_0, l_1, N_1, \ldots, N_{i-1}$, and thus $l_i$ can be chosen sufficiently large compared to $|V(G_{i-1})|$ for $i=1,\ldots, d$).
\begin{lemma}\label{lemma-itercol}
Let $f: \bb{N} \to [3,+\infty)$ be a non-decreasing function such that $\lim_{r\to \infty}f(r) = +\infty$.
Suppose $G$ is a graph and $\prec$ is a linear order $\prec$ on $V(G)$ such that $\col_{\prec,r}(G)\le f(r)$ holds
for every non-negative integer $r$.  Then there exists $l_0$ such that for every positive integer $N$ and for every $l\ge l_0$,
the ordering $\prec$ can be extended to $G^+=G^+(N,l)$ so that $\col_{\prec,r}(G^+)\le f(r)$ holds for every non-negative integer $r$.
\end{lemma}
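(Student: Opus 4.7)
The plan is to extend $\prec$ to a linear order $\prec'$ on $V(G^+)$ in three blocks: first the vertices of $V(G)$ (in the order $\prec$), then the hubs $v_1, \ldots, v_N$ (in an arbitrary order), and finally the internal vertices of all paths, ordered so that internals lying closer to a hub on their path come earlier, with ties broken arbitrarily. I will choose $l_0$ to be the least positive integer with $f(l_0) \ge |V(G)| + 1$, which exists since $f$ is non-decreasing and unbounded. The goal is then to verify, by a case analysis on $v \in V(G^+)$, that $|L_{G^+, \prec', r}(v)| \le f(r)$ for every $r \ge 0$ whenever $l \ge l_0$.

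For the case where $v$ is an internal vertex of some path $P_{i, u}$, the key observation is that both the hub $v_i$ and the endpoint $u$ of that path are $\prec' v$, so neither can serve as an internal vertex of a $\prec'$-respecting path starting at $v$. This forces the respecting paths from $v$ to take one of only two forms: a single step to the neighbor closer to the hub, or a walk outward to $u$ through internals farther from the hub. The endpoints of such paths are $v$ itself, its closer-to-hub neighbor, and $u$, giving $|L| \le 3 \le f(r)$. For the case $v = v_i$, the same reasoning prevents any respecting path from transiting through a $V(G)$-vertex, and therefore also from reaching another hub (since every hub-to-hub walk passes through $V(G)$), leaving only paths along a single $P_{i, u}$ that terminate at $u$. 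Hence $|L| \le 1 + |V(G)|$ when $r \ge l$ and $|L| = 1$ otherwise, and in either case $|L| \le f(l_0) \le f(r)$ for $l \ge l_0$.

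The hard part will be the case $v \in V(G)$. Since all added vertices are $\succ' v$, one has $L_{G^+, \prec', r}(v) \subseteq V(G)$; but a priori, added vertices could appear as internals of respecting $G^+$-paths, yielding $V(G)$-endpoints not reachable by any $G$-path alone. The crucial observation I will use is that any simple $G^+$-path between two distinct $V(G)$-vertices whose internals contain an added vertex must contain a complete hub-to-path excursion of length at least $2l$, because the only way to transit between two legs of the form $P_{i, \cdot}$ at a hub is through the hub itself, which lies at distance $l$ from $V(G)$. Consequently, for $r < 2l$ every respecting $G^+$-path from $v$ of length at most $r$ remains entirely in $V(G)$ and thus restricts to a $G$-path respecting $\prec$, so $L_{G^+, \prec', r}(v) = L_{G, \prec, r}(v)$ and $|L| \le f(r)$ by the hypothesis on $G$. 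For $r \ge 2l \ge 2l_0$, the crude bound $|L| \le |V(G)| \le f(l_0) \le f(r)$ suffices. Combining the three cases yields $\col_{\prec', r}(G^+) \le f(r)$ for every $r \ge 0$, as required.
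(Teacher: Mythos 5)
Your proof is correct and takes essentially the same approach as the paper's: the same block ordering ($V(G)$ first, then the hubs, then the path-internals), the same choice of $l_0$ via unboundedness of $f$, and the same case analysis yielding the bound $3$ for added vertices and roughly $|V(G)|$ for large $r$. Your extra refinements (ordering internals by distance to their hub, and splitting the $V(G)$ case at $r<2l$ rather than $r<l$) are harmless but not needed — the paper's arbitrary ordering of internals and threshold $r<l$ already suffice.
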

\begin{proof}
Choose $l_0$ so that $f(r)\ge |V(G)|$ for every $r\ge l_0$.
We extend $\prec$ so that the vertices of $G$ precede all other vertices of $G^+$, and moreover, the hubs of $G^+$ precede
the remaining vertices in $V(G^+)\setminus V(G)$.
If $r<l$, then $|L_{G^+,\prec,r}(v)|=|L_{G,\prec,r}(v)|\le f(r)$ for every $v\in V(G)$
and $|L_{G^+,\prec,r}(v)|\leq 3$ for every $v\in V(G^+)\setminus V(G)$.  If $r\ge l\ge l_0$,
then $|L_{G^+,\prec,r}(v)|\le \max(3,|V(G)|)\le f(r)$ for every $v\in V(G^+)$.
\end{proof}

It remains to show that (ii) holds. Suppose for a contradiction that there exists a $(c,\sqsubseteq_s)$-tame representation $\varphi$ of $G$ in $\bb{R}^d$ such that $\varphi(v) = p(v) + B(v)$  for some $p(v) \in \bb{R}^d$ and  some centrally symmetric box $B(v)$. It follows from  Lemmas~\ref{lemma-rel2} and~\ref{lemma-cmp}, that there exists $s'$ dependent on $d$ and $s$ only such that $B(v) \subseteq s'B(u)$ for all $u,v \in V(G)$ satisfying $\vol(B(v)) \leq \vol(B(u))$. 

Let $V_{i,1},\ldots, V_{i,N_i}$ be the stars of $G_i$, and let $u_{i,j} \in V_{i,j}$ be chosen to maximize $\vol(B(u_{i,j}))$ for every $1 \leq j \leq N_i$. Let $z_i \in V(G_{i-1})$ be chosen so that $\vol(B(z_i))$ is minimum. Let $$J_i =\{ j \in [N_i] \:|\: \vol(B(u_{i,j})) > \vol(B(z_i))\}.$$  As in Theorem~\ref{thm-color}, 
let $\prec$ be a linear ordering of vertices $v\in V(G)$ in a non-increasing order according to the volume of $B(v)$.  Note that for each $j \in J_i$ there exists a path of length at most $2l_i$ with one end $z_i$ the other end  $u_{i,j}$ and all internal vertices in $V_{i,j}$. Thus, $|L_{\prec, 2 l_i}(z_i)| \geq |J_i|$. By Theorem~\ref{thm-color}, there exists $\delta=\delta(c,s,d) >0 $ such that  $|L_{\prec, 2 l_i}(z_i)| \leq \delta (2l_i)^d$. We assume that $N_i \geq 2\delta(2l_i)^d$, and so without loss of generality we may assume that $j  > N_i/2 $ for every $j \in J_i$. Let $W_i = \{u_{i,j}\}_{1 \leq j \leq N_i/2}$ for every $1 \leq i \leq d$, and let $W_0=V(G_0)$. 

By Lemma~\ref{lemma-boxes}, the pigeonhole principle, and the assumption that $\varphi$ is $c$-thin, it follows, given that $N_0,\ldots,N_d$ are chosen appropriately large, that there exist
$0 \leq i <  j \leq d$, $U \subseteq W_i$, $V \subseteq W_j$, and a linear projection $\pi: \bb{R}^d \to \bb{R}$
such that $|U|\geq s'+6$, $|V| > 32(s')^3 l^2_j$, the projections $\mc{U}=\{\pi( p(u) +B(u)):u \in U\}$ are pairwise disjoint, and so are
the projections $\mc{V}=\{\pi( p(v) +B(v)):v \in V\}$.  Since $i<j$, we have
$\vol(B(v)) \leq \vol(B(u))$ for $u\in U$ and $v\in V$, and thus $B(v)\subseteq s'B(u)$.
It follows that $|J|\le s'|I|$ for every $I\in\mc{U}$ and $J\in\mc{V}$.

Consider any $u \in U$ and $v \in V$.
There exists a path $P$ of length at most $2l_j$ from $u$ to $v$ such that $\vol(B(x)) \leq \vol(B(v))$ 
for every $x \in V(P)-\{u\}$. Recall that this implies $B(x) \subseteq s'B(v)$ for every such $x$. Observation~\ref{obs:central} implies, as in
the proof of Theorem~\ref{thm-color}, that $p(u)+B(u)) \cap (p(v)+lB(v)) \neq  \emptyset$, where
$l=4s'l_j$.  Consequently, $I \cap (l * J) \neq \emptyset$ for every $I\in\mc{U}$ and $J\in\mc{V}$.
However, $|\mc{V}|>32(s')^3 l^2_j=2s'l^2$, which contradicts Lemma~\ref{lemma-combip}.
This finishes our proof of Theorem~\ref{thm-converse}.

\section{Concluding remarks}\label{sec-rem}

Note that in the proof of Theorem~\ref{thm-color} via Lemma~\ref{lemma-color}, the assumption that the
representation is $c$-thin is only used to lower-bound the volume of $X_v$.
Hence, this assumption can be relaxed somewhat; we did not do so as the
relaxations we could come up with are rather technical and restrict the placement
of the sets representing the vertices rather than just the shapes.

While we argued that the assumption that the shapes in $S$ are
pairwise $\le_k$-comparable for some fixed $k$ is necessary, the assumption they are
pairwise $\sqsubseteq_s$-comparable for a fixed $s$ is not.
For example, let $\ell_1=1$ and $\ell_{h+1}=\tfrac{1}{2(h+1)}\ell_h$ for all integers $h\ge 1$,
and consider the set $S^\star$ of shapes in $\mathbb{R}^2$ containing, for each positive integer $h$,
the trapezoid $T_h$ with vertices $(0,0)$, $(\ell_h,0)$, $(\ell_h,h\ell_h)$, and $(0,2h\ell_h)$
and an axis-aligned square $S_h$ with sides of length $\ell_h$.
Note that that $T_{h+1}\le_1 S_h\le_1 T_h$; see Figure~\ref{fig:ST}.  Furthermore, $S_h$ and $T_h$ are $\sqsubseteq_s$-incomparable
for every $s<\tfrac{3}{2}h$, and thus the elements of $S^\star$ are not pairwise $\sqsubseteq_s$-comparable
for any real number $s$.  However, thin $S^\star$-shaped representability does imply polynomial coloring numbers.

\begin{figure}
\centering
\includegraphics[scale=.75]{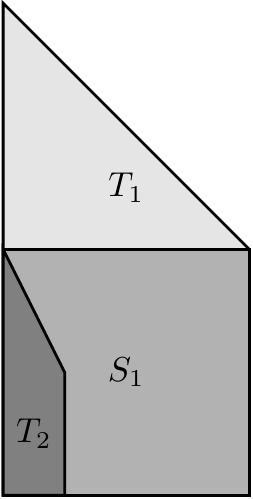}
\caption{The shapes $T_2$, $S_1$, and $T_1$.}
\label{fig:ST}
\end{figure}

\begin{lemma}
For every positive integer $c$, there exists a function $f_c\in O(r^2)$ such that
$\col_r(G)\le f_c(r)$ holds for every positive integer $r$ and every graph $G$ with a $c$-thin
$S^\star$-shaped intersection representation.
\end{lemma}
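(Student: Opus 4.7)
The plan is to apply Lemma~\ref{lemma-color} after a careful setup that parallels the proof of Theorem~\ref{thm-color}. Order the vertices of $G$ by $h(v)$ non-decreasing, breaking ties so that trapezoid-shaped vertices precede square-shaped ones of the same index. Set $\omega(v)=\varphi(v)$, and let $\iota(v)$ be the ``rectangular core'' of $\varphi(v)$: for a trapezoid $\varphi(v)=p_v+T_{h(v)}$, put $\iota(v)=p_v+[0,\ell_{h(v)}]\times[0,h(v)\ell_{h(v)}]$; for a square $\varphi(v)=p_v+S_{h(v)}$, put $\iota(v)=\varphi(v)$. Since $\iota(v)\subseteq\varphi(v)$ in both cases, $c$-thinness of $\iota$ is inherited from $\varphi$, so condition (b) of Lemma~\ref{lemma-color} holds, and (a) is immediate from convexity.

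To verify (c), I would use the nesting $T_{h+1}\le_1 S_h\le_1 T_h$ of the shapes in $S^\star$ to obtain $\omega(v)\le_1\omega(u)$ whenever $u\prec v$, and the fast decay $\ell_{h+1}=\ell_h/(2(h+1))$ of the sequence to show that $\iota(u)$, an axis-aligned rectangle with both side lengths at least $\ell_{h(u)}$, admits a translate of $\omega(v)$ around any of its points with overlap at least $\vol(\omega(v))$. This yields $\omega(v)\sqsubseteq_s\iota(u)$ with a constant $s$ depending only on the structure of $S^\star$.

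The main obstacle is condition (d), which requires $\omega(v)\cap\iota(u)\neq\emptyset$ for every edge $uv\in E(G)$ with $u\prec v$. This may fail when $\varphi(u)=p_u+T_{h(u)}$ is a trapezoid and the intersection $\varphi(u)\cap\varphi(v)$ lies entirely in the narrow triangular tail of $\varphi(u)$ (the region at heights $[h(u)\ell_{h(u)},2h(u)\ell_{h(u)}]$ relative to $p_u$), missing the rectangular core $\iota(u)$. To overcome this, I would split the count $|L_{G,\prec,r}(v)|$ into two contributions. The ``core-reachable'' vertices are those $u$ for which there exists some length-at-most-$r$ path in $G$ respecting $\prec$ whose last edge $x_{m-1}u$ satisfies $\varphi(x_{m-1})\cap\iota(u)\neq\emptyset$; along such a path, condition (d) is effectively met, and the standard argument of Lemma~\ref{lemma-color} bounds their count by $O_c(r^2)$.

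For the remaining ``tail-only'' vertices, I would argue directly using the narrowness of the trapezoid tails. Concretely, if $u$ is tail-only-reachable, then along any admissible path the last intersection point lies in the tail of $\varphi(u)$ at some height where the local width of $\varphi(u)$ is at most $\ell_{h(u)}\le\ell_{h(v)}$; the shape $\varphi(x_{m-1})$, being also of index at least $h(v)$, has width at most $\ell_{h(v)}$, and its intersection with the tail determines a region of area at least a constant fraction of $\ell_{h(v)}^2$ (using Lemma~\ref{lemma-height} together with the local geometry of the tail). These regions all lie within a disk of radius $O(r\,\mathrm{diam}(\varphi(v)))$ around $\varphi(v)$, and $c$-thinness across the disk bounds their number also by $O_c(r^2)$. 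Summing the two contributions yields $\col_{\prec,r}(G)\le f_c(r)$ with $f_c(r)=O(r^2)$, as required.
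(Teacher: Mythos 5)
Your setup is reasonable, and the ``core-reachable'' half of your count does go through by rerunning the proof of Lemma~\ref{lemma-color}. (A side remark: when $\varphi(v)$ is a trapezoid the detour through a rectangular core is unnecessary, since $T_h\sqsubseteq_{3/2}T_{h'}$ and $T_h\sqsubseteq_{3/2}S_{h'}$ for $h'\le h$ --- a translate of $T_h$ with its apex at the apex of $T_{h'}$ lies entirely inside $T_{h'}$ because its hypotenuse is steeper --- so the only genuine obstruction is $\varphi(v)$ a square meeting a trapezoid only in its tail.) The gap is in the ``tail-only'' half, and it is exactly the difficulty these shapes were built to exhibit. You assert that the intersection of $\varphi(x_{m-1})$ with the tail of $\varphi(u)$ ``determines a region of area at least a constant fraction of $\ell_{h(v)}^2$.'' Two convex sets that intersect may touch in a single point, so the intersection itself carries no area; and if you instead mean a piece of $\varphi(u)$ near the contact point, no such piece need exist: at distance $\rho$ below the apex of the tail of $T_{h'}$ the trapezoid has width only $\rho/h'$, so the portion of $\varphi(u)$ within distance $R$ of a contact point near the apex has area $O(R^2/h')$, which is $o(\ell_{h(v)}^2)$ for $R=O(r\operatorname{diam}\varphi(v))$ once $h'$ is large relative to $r$. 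Lemma~\ref{lemma-height} produces a ball somewhere in the shape, not near a prescribed point, and the bulky rectangular core of $\varphi(u)$ may lie entirely outside your disk --- this is the two-dimensional shadow of Example~\ref{ex-narrow}. (Two smaller slips: the inequality $\ell_{h(u)}\le\ell_{h(v)}$ is reversed, and for $\varphi(v)=T_h$ your disk has radius $O(rh\ell_h)$, so even correct $\Omega(\ell_h^2)$ charges would only yield $O(r^2h^2)$; the normalization must be by $\operatorname{vol}(\varphi(v))$ and the envelope-based region $Y_v$ of Lemma~\ref{lemma-color}.)

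The paper closes this case not with an area count but with a one-dimensional one. For $\varphi(v)=S_h$ and a trapezoid $\varphi(u)$ whose intersection with $Y_v$ lies in its tail and which crosses the bottom edge $D$ of $Y_v$, one places a translate $T(u)$ of $T_h$ inside $\varphi(u)$ reaching $D$ with its triangular part; then the full-width rectangular part of $T(u)$ must cross the line $p$ at distance $h\ell_h$ below $D$, producing a segment of length $\ell_h$ inside a window of length $O(\ell_h r)$ on $p$, and $c$-thinness along $p$ bounds the number of such $u$ by $O(cr)$. Some device of this kind is needed; a purely area-based charge near the contact points cannot work.
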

\begin{proof}
Note that if $h'\le h$, then $S_h\sqsubseteq_1 S_{h'}$, $T_{h+1}\sqsubseteq_{3/2} S_{h'}$, and $T_h\sqsubseteq_{3/2} T_{h'}$.
Hence, to bound $|L_{G,\prec,r}(v)|$, we can proceed in exactly the same way as in the proof of Theorem~\ref{thm-color}
if $\varphi(v)$ is a trapezoid, as well as to bound the number of vertices in $L_{G,\prec,r}(v)$ represented by squares in case $\varphi(v)$ is a square.
Hence, suppose that $\varphi(v)=S_h$ for some positive integer $h$, and let us bound the number of vertices $u\in L_{G,\prec,r}(v)$ represented by trapezoids $T_{h'}$
with $h'\le h$.  Let $Y_v$ be the square with sides of length $O(\ell_hr)$ as defined in the proof of Theorem~\ref{thm-color},
and let $D$ be the line segment forming lower side of $Y_v$.
The argument from the proof of Theorem~\ref{thm-color} can also be used to bound the number of vertices $u$ for which
$\varphi(u)$ does not intersect $D$, or for which $Y_v$ intersects $\varphi(u)$ anywhere outside of the
upper triangular part of $\varphi(u)$.  Furthermore, the ends of $D$ are contained in $\varphi(u)$ for at most $2c$ vertices $u$.

Let $Q\subseteq L_{G,\prec,r}(v)$ consist of the vertices $u$ such that $\varphi(u)$ is a trapezoid intersecting $Y_v$ only in its
upper triangular part, not containing the ends of $D$, and intersecting $D$.  As argued above, it suffices to bound the size of $Q$.
Note that for each $u\in Q$, there exists a translation of $T_h$ contained in $\varphi(u)$ and intersecting $D$ in its upper triangular part;
we let $T(u)$ be such a translation.  Let $p$ be a line parallel to $D$ at distance $h\ell_h$ below $D$.  Then $T(u)\cap p$ is a line segment
of length $\ell_h$ and contained in a segment of $p$ of length $O(\ell_hr)$.  Since $\varphi$ is $c$-thin, it follows that
$|Q|=O(cr)$.
\end{proof}
Hence, we leave open the question of whether there exists a natural sufficient and necessary
condition on a set $S$ of convex shapes enforcing that all graphs with a thin $S$-shaped intersection
representation have polynomial coloring numbers or sublinear separators.
Let us remark that no example of a class of graphs with strongly
sublinear separators (i.e., $O(n^{1-\varepsilon})$-separators for some fixed $\varepsilon>0$)
and superpolynomial coloring numbers is known.  Indeed, it has been asked~\cite{espsublin}
whether strongly sublinear separators imply polynomial coloring numbers.  If one is inclined
to think this is false, it might be natural to look for a counterexample in a geometric setting.

Considering Example~\ref{ex-narrow}, one could ask whether instead of giving
further assumptions on the shapes, it would perhaps be more natural to add the
assumption that for some fixed integer $m$, the represented graph does not
contain the complete bipartite graph $K_{m,m}$ as a subgraph.  This is the case
in $\mathbb{R}^2$, where the result of Lee~\cite{lee2016separators} implies
that for all positive integers $a\le b$, every intersection graph of connected
subsets of the plane avoiding $K_{a,b}$ as a subgraph has
$O\bigl(n^{1-\frac{1}{2a}}\bigr)$-separators.  However, the claim fails in $\mathbb{R}^3$, as for any
bipartite graph $H$, the graph $H'$ obtained by subdividing each edge once can
be represented as a $2$-thin intersection graph of pairwise $\le_1$-comparable
convex shapes: In Example~\ref{ex-narrow}, decrease the $z$-coordinate of the sets
$L_1$, \ldots, $L_m$ slightly to make them disjoint from $R_1$, \ldots, $R_m$,
and then add small cubes intersecting exactly $L_i$ and $R_j$ for each pair
$(i,j)$ corresponding to a vertex subdividing an edge of $H$.  If $H$ is a
$3$-regular expander, then $H'$ is an expander as well, and thus $H'$ does not
have sublinear separators; moreover, $K_{2,2}\not\subseteq H'$.

One might also ask why we only allow the translation of the shapes.
If we also allowed rotation, then by Lemma~\ref{lemma-incomp}, the existence of
sublinear separators is equivalent to all shapes having bounded aspect ratio.
Indeed, for every $k$ and $d\ge 2$, there exists $t$ such that every shape in $\mathbb{R}^d$ of aspect ratio at least $t$
has a rotation with which it is $\le_k$-incomparable.  The same is true if we instead allowed
scaling, up to a bijective affine transformation: by the affine transformation, we can assume
a cube is an envelope of one of the shapes $B\in S$, and if $S$ contains another shape of a large aspect
ratio, then we can scale it to be $\le_k$-incomparable with $B$ for a large $k$.

Although as argued above, the condition of $\sqsubseteq_s$-comparability is not necessary,
we consider it to be quite natural.  This leads us to a question of which graph classes
can be represented over pairwise $\sqsubseteq_s$-comparable sets of convex shapes;
we were not able to fully resolve this question, but let us mention several interesting observations.
Let us say that a class $\GG$ of graphs is \emph{$\sqsubseteq$-representable}
if there exist positive integers $d$, $c$, and $s$ such that for every graph $G\in\GG$, there exists
a set $S$ of pairwise $\sqsubseteq_s$-comparable convex shapes in $\mathbb{R}^d$ and a $c$-thin $S$-shaped
intersection representation of $G$.  Note that we allow the set of shapes to depend on $G$.
For example, Koebe's result~\cite{koebe} implies that the class of planar graphs is $\sqsubseteq$-representable.

The $\sqsubseteq$-representable classes enjoy a number of interesting closure properties.
Let us say that two shapes $B_1$ and $B_2$ are \emph{$\sqsubseteq_s$-equivalent}
if $B_1\sqsubseteq_s B_2$ and $B_2\sqsubseteq_s B_1$.  We say a class $\GG$ of graphs is \emph{$\square$-representable}
if there exist positive integers $d$, $c$, and $s$ such that for every graph $G\in\GG$, there exists
a set $S$ of pairwise $\sqsubseteq_s$-equivalent convex shapes in $\mathbb{R}^d$ and a $c$-thin $S$-shaped
intersection representation of $G$; as we will see below (Corollary~\ref{cor-sqrep}),
a class of graphs is $\square$-representable if and only if it has polynomial growth.
We say that $\GG$ is \emph{fat-$\square$-representable} if the condition of $c$-thinness is dropped.

For two classes $\GG_1$ and $\GG_2$ of graphs, let $\GG_1\boxtimes \GG_2=\{G_1\boxtimes G_2:G_1\in\GG_1,G_2\in\GG_2\}$,
where $\boxtimes$ is the strong product of graphs; and let $\GG_1\land \GG_2=\{G_1\cap G_2:G_1\in\GG_1,G_2\in\GG_2,V(G_1)=V(G_2)\}$.
\begin{lemma}\label{lemma-prod}
Let $\GG_1$ and $\GG_2$ be classes of graphs, where $\GG_1$ is $\sqsubseteq$-representable.
If $\GG_2$ is $\square$-representable, then $\GG_1\boxtimes \GG_2$ is $\sqsubseteq$-representable.
If $\GG_2$ is fat-$\square$-representable, then $\GG_1\land \GG_2$ is $\sqsubseteq$-representable.
\end{lemma}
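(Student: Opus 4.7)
The plan is to construct the representations of $G_1 \boxtimes G_2$ and $G_1 \land G_2$ as Cartesian products of the given representations of $G_1$ and $G_2$. For the first part, fix a $c_1$-thin representation $\varphi_1$ of $G_1 \in \GG_1$ in $\bb{R}^{d_1}$ by pairwise $\sqsubseteq_{s_1}$-comparable convex shapes, and a $c_2$-thin representation $\varphi_2$ of $G_2 \in \GG_2$ in $\bb{R}^{d_2}$ by pairwise $\sqsubseteq_{s_2}$-equivalent convex shapes. Define $\varphi(u,v) = \varphi_1(u) \times \varphi_2(v)$ for each $(u,v) \in V(G_1) \times V(G_2)$. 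Then each $\varphi(u,v)$ is a non-degenerate compact convex subset of $\bb{R}^{d_1+d_2}$. Since $X_1 \times X_2$ meets $Y_1 \times Y_2$ iff both $X_1 \cap Y_1 \neq \emptyset$ and $X_2 \cap Y_2 \neq \emptyset$, the map $\varphi$ is an intersection representation of $G_1 \boxtimes G_2$, and it is $(c_1c_2)$-thin since a point $(x,y) \in \varphi(u,v)$ forces $x \in \varphi_1(u)$ and $y \in \varphi_2(v)$.

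The crux is the comparability verification, which I expect to be the only nontrivial step. Given two vertices $(u,v)$ and $(u',v')$, by $\sqsubseteq_{s_1}$-comparability in $\GG_1$ I may assume $\varphi_1(u) \sqsubseteq_{s_1} \varphi_1(u')$; by $\square$-equivalence in $\GG_2$, we also have $\varphi_2(v) \sqsubseteq_{s_2} \varphi_2(v')$. For every $(x,y) \in \varphi_1(u') \times \varphi_2(v')$, the definitions yield a translation $B_1$ of $\varphi_1(u)$ and a translation $B_2$ of $\varphi_2(v)$ with $x \in B_1$, $y \in B_2$, $\vol(B_1 \cap \varphi_1(u')) \ge \vol(\varphi_1(u))/s_1$, and $\vol(B_2 \cap \varphi_2(v')) \ge \vol(\varphi_2(v))/s_2$. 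Then $B_1 \times B_2$ is a translation of $\varphi(u,v)$ containing $(x,y)$ whose intersection with $\varphi(u',v')$ factors as $(B_1 \cap \varphi_1(u')) \times (B_2 \cap \varphi_2(v'))$, so its volume is at least $\vol(\varphi(u,v))/(s_1s_2)$. Hence $\varphi(u,v) \sqsubseteq_{s_1s_2} \varphi(u',v')$, and $\GG_1 \boxtimes \GG_2$ is $\sqsubseteq$-representable with parameters $d=d_1+d_2$, $c=c_1c_2$, $s=s_1s_2$.

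For the second part, the representation $\varphi_2$ of $G_2$ need not be thin, but its shapes are still pairwise $\sqsubseteq_{s_2}$-equivalent. Since $V(G_1)=V(G_2)$, define $\varphi(v) = \varphi_1(v) \times \varphi_2(v)$. The same product identity for intersections shows that $\varphi(u) \cap \varphi(v) \neq \emptyset$ iff both $\varphi_1(u) \cap \varphi_1(v) \neq \emptyset$ and $\varphi_2(u) \cap \varphi_2(v) \neq \emptyset$, i.e.\ iff $uv \in E(G_1) \cap E(G_2) = E(G_1 \land G_2)$, so $\varphi$ is an intersection representation of $G_1 \land G_2$. Crucially, thinness is now inherited from $\varphi_1$ alone, since any point $(x,y) \in \varphi(v)$ has $x \in \varphi_1(v)$ and at most $c_1$ vertices $v$ satisfy this; thus $\varphi_2$ is not required to be thin. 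Comparability is verified by the exact same argument as in the first part, using the $\square$-equivalence of $\GG_2$ to align the direction of $\sqsubseteq_{s_2}$ with whichever direction of $\sqsubseteq_{s_1}$ happens to hold. The main (and really only) subtle point throughout is the asymmetry of $\sqsubseteq_s$, which is precisely why we require $\GG_2$ to be (fat-)$\square$-representable rather than merely $\sqsubseteq$-representable.
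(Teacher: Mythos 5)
Your proof is correct and follows essentially the same route as the paper: represent the vertices by Cartesian products of the given shapes, observe that thinness multiplies (and for $\land$ is inherited from $\varphi_1$ alone), and use the $\sqsubseteq_{s_2}$-equivalence of the second family to align the direction of comparability with that of the first. You in fact spell out the product-comparability computation that the paper only asserts, and your identification of the asymmetry of $\sqsubseteq_s$ as the reason for requiring $\square$-representability of $\GG_2$ is exactly the point.
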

\begin{proof}
For $i\in\{1,2\}$, let $\varphi_i$ be an $S_i$-shaped intersection representation of a graph $G_i\in\GG_i$,
where $S_i$ is a set of pairwise $\sqsubseteq_{s_i}$-comparable convex shapes in $\mathbb{R}^{d_i}$;
furthermore, assume that $\varphi_1$ is $c_1$-thin, and that the shapes in $S_2$ are pairwise $\sqsubseteq_{s_2}$-equivalent.
Let $S=\{B_1\times B_2:B_1\in S_1,B_2\in S_2\}$ be a set of shapes in $\mathbb{R}^{d_1+d_2}$, and
note that the shapes in $S$ are pairwise $\sqsubseteq_{s_1s_2}$-comparable.

For $v_1\in V(G_1)$ and $v_2\in V(G_2)$, let $\varphi_\boxtimes(v_1,v_2)=\varphi_1(v_1)\times\varphi_2(v_2)$,
and if $V(G_1)=V(G_2)$, then let $\varphi_\land(v_1)=\varphi_1(v_1)\times\varphi_2(v_1)$.  Then $\varphi_\boxtimes$
is an intersection representation of $G_1\boxtimes G_2$, and $\varphi_\land$ is an intersection representation of $G_1\cap G_2$.
Furthermore, $\varphi_\land$ is $c_1$-thin, and if $\varphi_2$ is $c_2$-thin, then $\varphi_\boxtimes$ is $c_1c_2$-thin.
The claims of the lemma easily follow.
\end{proof}

Clearly, if a class $\GG$ is $\sqsubseteq$-representable, then the class of all induced subgraphs of graphs in $\GG$ also is
$\sqsubseteq$-representable.  Somewhat more surprisingly, the same is true if we consider subgraphs instead of induced subgraphs.
To see this, we need the following observation.
\begin{lemma}\label{lemma-delstar}
Let $\GG$ be a class of graphs, and for a positive integer $k$, let $\GG_k$ be the class of graphs obtained from graphs in $\GG$
by deleting edges of the union of $k$ star forests.  If $\GG$ is $\sqsubseteq$-representable, then $\GG_k$ is $\sqsubseteq$-representable.
\end{lemma}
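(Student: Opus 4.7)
The plan is to proceed by induction on $k$. Since $\GG_k = (\GG_{k-1})_1$, it suffices to prove the base case $k = 1$: if $\GG$ is $\sqsubseteq$-representable, then so is $\GG_1$. Given $G \in \GG$ with a $(c, \sqsubseteq_s)$-tame representation $\varphi$ in $\bb{R}^d$ and a star forest $F$ on $V(G)$, the goal is to construct a $(c', \sqsubseteq_{s'})$-tame representation $\varphi'$ of $G - F$ in $\bb{R}^{d+d'}$ for parameters $c', s', d'$ depending only on $c, s, d$.

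The construction would take the product form $\varphi'(v) = \varphi(v) \times \psi(v)$ for an auxiliary convex shape $\psi(v) \subseteq \bb{R}^{d'}$. Since $\varphi(u) \cap \varphi(v) \neq \emptyset$ precisely when $uv \in E(G)$, the product realizes $G - F$ if and only if $\psi(u) \cap \psi(v) \neq \emptyset$ iff $uv \notin E(F)$ for every edge $uv \in E(G)$ (pairs outside $E(G)$ are automatically disjoint in $\varphi$ and so are unconstrained in $\psi$). I would design $\psi$ according to each vertex's role in $F$: vertices uninvolved in $F$ receive a ``universal'' auxiliary shape intersecting all others, while centers and leaves receive positioned shapes arranged to separate each center from its own leaves but still intersect across distinct stars.

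The main obstacle is that $F$ may contain unboundedly many stars, yet the number of extra dimensions $d'$ must stay bounded, so dedicating one coordinate per star is impossible. To bypass this I would exploit the geometric information already encoded in $\varphi$: since $G$-adjacent vertices have $\sqsubseteq_s$-comparable shapes whose intersection forces their positions to be close in $\bb{R}^d$, stars whose vertices are $G$-connected occupy nearby regions and can be told apart geometrically. The plan is to define $\psi(l)$ for each leaf $l$ using a canonical point of $\varphi(c(l))$ (for example, its centroid) so that ``same star'' corresponds to a specific geometric relation decodable in bounded extra dimensions, while ``different star'' automatically gives rise to intersecting auxiliary shapes. The hardest technical step will be verifying that the resulting family remains pairwise $\sqsubseteq_{s'}$-comparable with $c'$-thinness preserved, which I expect to require a careful calibration of the canonical points and of the sizes of the auxiliary shapes in terms of the extents and volume ratios allowed by the original $\sqsubseteq_s$-comparability of $\varphi$.
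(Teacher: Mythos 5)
Your top-level plan -- represent $G-E(F)$ as a product $\varphi(v)\times\psi(v)$ where $\psi$ realizes the complement of the star forest on the relevant pairs -- is exactly the paper's route (this is the $\land$ operation of Lemma~\ref{lemma-prod}). But the obstacle you identify as ``the main obstacle'' (unboundedly many stars versus bounded extra dimension) is one the paper dissolves with an observation you are missing: the auxiliary representation $\psi$ does \emph{not} need to be thin, because the thinness of the product $\varphi\times\psi$ is inherited entirely from the thinness of $\varphi$. This is precisely why Lemma~\ref{lemma-prod} only requires $\GG_2$ to be \emph{fat}-$\square$-representable. Once thinness of $\psi$ is dropped, the complement of a star with $t$ rays is trivially representable in $\mathbb{R}^2$ by a unit square $Q_1$ for the center and $t$ coincident copies of its translate $Q_2=Q_1+(1+\varepsilon,0)$ for the leaves, and distinct stars are handled by rotating each star's pair of squares slightly about a common point so that squares from different stars intersect. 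No geometric information from $\varphi$ is needed at all.

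By contrast, the workaround you sketch -- encoding ``same star'' via canonical points of $\varphi(c(l))$ -- is both unnecessary and unlikely to succeed as stated: distinct stars of $F$ can be arbitrarily interleaved in the representation $\varphi$ (nothing prevents two centers from having nearly identical shapes and positions), so there is no reason a bounded number of extra coordinates can decode star membership from the geometry of $\varphi$. You also leave unresolved the verification that the product shapes remain pairwise $\sqsubseteq_{s'}$-comparable; in the paper this is immediate because the auxiliary shapes are pairwise $\sqsubseteq_{s_2}$-\emph{equivalent} (they are all slightly rotated unit squares), and the product of $\sqsubseteq_{s_1}$-comparable shapes with $\sqsubseteq_{s_2}$-equivalent shapes is $\sqsubseteq_{s_1s_2}$-comparable. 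So the gap is concrete: without the fat-$\square$ relaxation, your construction has no working mechanism for separating a center from unboundedly many leaves across unboundedly many stars in bounded dimension.
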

\begin{proof}
By Lemma~\ref{lemma-prod}, it suffices to argue that the class of complements of star forests is fat-$\square$-representable.
Indeed, the complement of a star with $t$ rays can be represented in $\mathbb{R}^2$ by a unit square $Q_1$ and $t$ copies of its translation $Q_2$
by the vector $(1+\varepsilon, 0)$ for arbitrarily small $\varepsilon>0$.  The complement of a star forests can then be represented by first
taking this representation for each of its stars, then rotating the representations slightly around the point $(1+\varepsilon/2,1/2)$
so that any two squares from representations of distinct stars intersect.
\end{proof}

\begin{corollary}\label{cor:subgraph}
Let $\GG$ be a class of graphs, and let $\GG'$ be the class of all subgraphs of graphs in $\GG$.
If $\GG$ is $\sqsubseteq$-representable, then $\GG'$ is $\sqsubseteq$-representable.
\end{corollary}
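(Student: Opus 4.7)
The plan is to apply Lemma~\ref{lemma-delstar} after establishing that graphs in $\GG$ have uniformly bounded \emph{star arboricity}: a constant $k=k(\GG)$ such that the edges of every $G\in\GG$ decompose into at most $k$ star forests. Granted this, for every subgraph $H$ of every $G\in\GG$ the edge set $E(G)\setminus E(H)\subseteq E(G)$ inherits such a decomposition, so $H$ is obtained from $G$ by deleting the union of at most $k$ star forests. Hence $\GG'\subseteq\GG_k$, and Lemma~\ref{lemma-delstar} yields that $\GG_k$, and thus its subclass $\GG'$, is $\sqsubseteq$-representable.

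For the uniform bound on star arboricity I will invoke Theorem~\ref{thm-color} on the representations witnessing that $\GG$ is $\sqsubseteq$-representable; this produces a constant $\delta$ such that $\col_r(G)\le\delta r^d$ for every $G\in\GG$ and every positive integer $r$. Setting $r=1$ shows each $G\in\GG$ is $(\delta-1)$-degenerate. Orienting $E(G)$ from later to earlier along a degeneracy ordering gives every vertex out-degree at most $\delta-1$; assigning each vertex's out-edges distinct colours from $\{1,\ldots,\delta-1\}$ partitions $E(G)$ into $\delta-1$ sub-orientations, each acyclic (as a sub-orientation of a linear order) with maximum out-degree $1$, and hence each a rooted forest.

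Finally, every rooted forest decomposes into two star forests by grouping the edges $vp(v)$ according to the parity of the depth of $v$: the edges with $v$ at odd depth form stars centred at the even-depth vertices, while those with $v$ at even depth form stars centred at the odd-depth vertices. Applied to each of the $\delta-1$ rooted forests this yields a decomposition of $E(G)$ into $k:=2(\delta-1)$ star forests, uniformly in $G\in\GG$. I do not foresee any substantial obstacle; the argument reduces to the degeneracy bound extracted from Theorem~\ref{thm-color} together with the elementary observation that rooted forests decompose into two star forests.
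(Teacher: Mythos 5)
Your proposal is correct and follows the same overall strategy as the paper: pass to the induced subgraph, establish a uniform bound on star arboricity for $\GG$, and then invoke Lemma~\ref{lemma-delstar}. The only substantive difference is how the star-forest decomposition is obtained. The paper bounds $\col_2(G)$ via Theorem~\ref{thm-color} and cites~\cite{zhu2009colouring} to bound the star chromatic number, which yields the decomposition into a bounded number of star forests. You instead use only $\col_1$, i.e.\ the degeneracy bound $\delta-1$, orient along the ordering, split into $\delta-1$ out-degree-one acyclic suborientations (each a rooted forest, since a cycle in the underlying graph would force out-degree two at its $\prec$-maximal vertex), and split each rooted forest into two star forests by depth parity. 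This is a self-contained and elementary replacement for the external citation, and it uses a weaker hypothesis ($\col_1$ rather than $\col_2$); the resulting constant $2(\delta-1)$ is of course different from the paper's $a^2$, but only the finiteness matters here. One small imprecision: a general subgraph $H$ of $G\in\GG$ need not be spanning, so the literal inclusion $\GG'\subseteq\GG_k$ fails for the class $\GG_k$ as defined in Lemma~\ref{lemma-delstar} (which only deletes edges). You should first replace $\GG$ by its closure under induced subgraphs --- which is $\sqsubseteq$-representable by restricting the representation, as the paper notes immediately before the corollary --- and then apply your edge-deletion argument to $G[V(H)]$; the degeneracy bound and hence the star-forest decomposition restrict to induced subgraphs without change.
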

\begin{proof}
By Theorem~\ref{thm-color}, there exists an integer $a$ such that $\col_2(G)\le a$ for every graph $G\in\GG$,
and by~\cite{zhu2009colouring}, $G$ has star chromatic number at most $a^2$, and thus $G$ can be expressed as a union of $a^2$ spanning star forests.
Thus, any subgraph of $G$ can be obtained from an induced subgraph of $G$ by deleting edges of the union at most $a^2$ spanning forests.
The claim then follows from Lemma~\ref{lemma-delstar}.
\end{proof}

\begin{corollary}\label{cor-sqrep}
A class of graphs $\GG$ is $\square$-representable if and only if there exists a polynomial bounding the growth of
every graph in $\GG$.
\end{corollary}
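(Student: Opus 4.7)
My plan is to prove the two directions separately.

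For the direction $\square$-representable $\Rightarrow$ polynomial growth, suppose $\GG$ is $\square$-representable and fix a graph $G\in\GG$ together with its $c$-thin intersection representation $\varphi$ by pairwise $\sqsubseteq_s$-equivalent convex shapes in $\bb{R}^d$. I will bound $|B_{G,r}(v_0)|$ for an arbitrary vertex $v_0$. Pairwise $\sqsubseteq_s$-equivalence, combined with Lemmas~\ref{lemma-rel2} and~\ref{lemma-cmp}, yields a constant $k$ depending only on $s$ and $d$ such that $\varphi(u)\le_k\varphi(v_0)$ and $\vol(\varphi(u))\ge\vol(\varphi(v_0))/s$ for every $u\in V(G)$. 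Repeating the envelope argument from the proof of Lemma~\ref{lemma-color} along any $v_0$--$u$ path of length at most $r$, each $\varphi(u)$ with $u\in B_{G,r}(v_0)$ lies in a common region $Y_{v_0}$ whose volume is at most $O(r^d)\vol(\varphi(v_0))$ by Lemma~\ref{lemma-envel}. Combining this with the $c$-thinness of $\varphi$ and the volume lower bound on each $\varphi(u)$ yields $|B_{G,r}(v_0)|=O(r^d)$ with the hidden constant depending only on $c$, $s$, and $d$, so the growth of every graph in $\GG$ is bounded by a single polynomial.

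For the converse direction, suppose the polynomial $p$ bounds the growth of every graph in $\GG$. Theorem~\ref{thm-KL} provides a fixed integer $d=d(p)$ such that each $G\in\GG$ admits an injection $\varphi_G\colon V(G)\to\bb{Z}^d$ with $\|\varphi_G(u)-\varphi_G(v)\|_\infty=1$ for every edge $uv\in E(G)$. Let $H_G$ be the subgraph, induced on $\varphi_G(V(G))$, of the king graph on $\bb{Z}^d$ whose edges are the pairs at $\|\cdot\|_\infty$-distance exactly $1$. Then $G$ is a spanning subgraph of $H_G$, and representing each vertex by the unit cube centered at $\varphi_G(v)$ shows that the class $\{H_G\colon G\in\GG\}$ is $\square$-representable via translates of a single shape.

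The remaining step is to suppress the edges $F_G=E(H_G)\setminus E(G)$ while keeping the representation $\square$-tame. Since $H_G$ has maximum degree at most $3^d-1$, Vizing's theorem decomposes $F_G$ into at most $3^d$ matchings, each of which is a star forest. I therefore need a $\square$-analogue of Lemma~\ref{lemma-delstar}: if $\GG_1$ is $\square$-representable and $\GG_2$ is fat-$\square$-representable, then $\GG_1\wedge\GG_2$ is $\square$-representable. The proof of Lemma~\ref{lemma-prod} adapts once one observes (directly from the definitions via Fubini) that a product of two $\sqsubseteq_s$-equivalent pairs of shapes is $\sqsubseteq_{s^2}$-equivalent, so pairwise equivalence is inherited by the product shapes while the thinness is inherited from $\GG_1$ as before. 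The explicit construction in the proof of Lemma~\ref{lemma-delstar} already shows that complements of star forests are fat-$\square$-representable, as all shapes used there are unit squares. Applying this $\land$-operation once per matching in the decomposition of $F_G$ produces a $\square$-representation of $G$ in dimension at most $d+2\cdot 3^d$. The main point to verify, and the most delicate part of the bookkeeping, is that the thinness, the equivalence constant, and the ambient dimension stay bounded uniformly over $\GG$; this holds because the number of iterations, as well as the parameters added at each iteration, depend only on $d=d(p)$.
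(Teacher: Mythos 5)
Your proposal is correct and follows essentially the same route as the paper: the forward direction adapts the volume-counting argument of Lemma~\ref{lemma-color} to bound $\Delta_r$ (using that $\sqsubseteq_s$-equivalence makes all shapes $\le_k$-comparable to and volume-comparable with $\varphi(v_0)$), and the converse combines Theorem~\ref{thm-KL} with $\square$-adaptations of Lemmas~\ref{lemma-prod} and~\ref{lemma-delstar}, exactly as the paper indicates. The only deviation is that you decompose the surplus edges via Vizing's theorem on the bounded-degree king graph rather than via the $\col_2$/star-chromatic-number argument of Corollary~\ref{cor:subgraph}; both give a uniformly bounded number of star forests, so this is a harmless (and slightly more self-contained) substitution.
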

\begin{proof}
When $\GG$ is $\square$-representable, the argument from the proof of Theorem~\ref{thm-color}
can be used to bound $\Delta_r$ rather than $\col_{\prec,r}$.
Conversely, if $\GG$ has polynomial growth, Theorem~\ref{thm-KL} implies that for every graph $G\in\GG$, a supergraph $G'$ of $G$
has a $2$-thin representation by unit balls in a bounded dimension.
Lemmas~\ref{lemma-prod} and~\ref{lemma-delstar} can be adapted to show that the class of all subgraphs of graphs in a fixed
$\square$-representable class is also $\square$-representable, completing the proof.
\end{proof}

Another easy construction is an addition of an apex vertex (a vertex adjacent to all the vertices of the graph): such a vertex can be
represented by a sufficiently large ball.

Finally, let us note that if we do not insist on considering intersection graphs, it is natural to consider the graphs with representation
described in Lemma~\ref{lemma-color}.  This representation is related to the overlap representation studied in~\cite{miller1998geometric}.

\bibliographystyle{siam}

\bibliography{../data}

\end{document}